\newcommand{\m}[1]{\mathbb{#1}}
\newcommand{\q}[1]{\mathcal{#1}}
\DeclareMathOperator{\Span}{\mathrm{Span}}
\renewcommand{\le}{\leqslant}
\renewcommand{\ge}{\geqslant}
\theoremstyle{plain}
\newtheorem{thm}{Theorem}
\newtheorem*{thm*}{Theorem}
\newtheorem{prop}[thm]{Proposition}
\newtheorem{lem}{Lemma}
\theoremstyle{definition}
\theoremstyle{remark}
\newtheorem{nb}{Remark}
\newtheorem*{claim}{Claim}
\newcommand{\R}{\mathbb{R}}
\newcommand{\N}{\mathbb{N}}
\newcommand{\ds}{\displaystyle}
\newcommand{\opH}{\mathbf{H}}
\newcommand{\opB}{\mathbf{B}}
\newcommand{\opL}{\mathbf{L}}
\newcommand{\opJ}{\mathbf{J}}
\newcommand{\br}{r}
\newcommand{\bpsi}{\psi^0}
\newcommand{\brho}{\rho}
\newcommand{\bg}{g}
\title{Multi-travelling waves for the nonlinear Klein-Gordon equation}
\author{Rapha\"el C\^ote and Yvan Martel}
\subjclass[2010]{35Q51 (primary), 35L71, 35Q40}
\keywords{Klein-Gordon equation, multi-soliton, ground states, excited states, instability}
\thanks{R.~C. and Y.~M. were supported in part by the ERC advanced grant 291214 BLOWDISOL. R.~C. was also supported in part by the ANR contract MAToS ANR-14-CE25-0009-01.}
\thanks{The authors thanks Xu Yuan (\'Ecole polytechnique) for pointing out an error in~\cite{CoteMar}.}
\begin{document} 
\allowdisplaybreaks

\begin{abstract}
For the nonlinear Klein-Gordon equation in $\R^{1+d}$, we prove the existence of multi-solitary waves  made of any number $N$ of decoupled bound states. This extends the work of C\^ote and Mu\~noz  \cite{CoMu} (Forum Math. Sigma {\bfseries2} (2014)) which was restricted to ground states, as were most previous similar results  for other nonlinear dispersive and wave models.
\end{abstract}

\maketitle

\section{Introduction}
In this paper we extend previous constructions of multi-solitary wave solutions for the nonlinear Klein-Gordon equation (NLKG) in $\R^{1+d}$, $d \ge 1$,
\begin{equation*}
\label{NLKG}
\tag{NLKG}
\partial_{tt} u - \Delta u + u -f(u)  = 0, \quad u(t,x) \in \R, \quad (t,x)\in \R \times \R^d. 
\end{equation*}
This equation arises in Quantum Field Physics as a model for a self-interacting, nonlinear scalar field, invariant under Lorentz transformations (see below). 
We focus on the particular case where
\begin{equation}\label{d.f}
f(u) = |u|^{p-1} u \quad \hbox{for } 1<p<\frac {d+2}{d-2} \quad \hbox{($p>1$ for $d=1$ or $2$)}
\end{equation}
but the arguments can extended to more general situations.
We set 
\[F(u) = \int_0^u f(v) dv = \frac{|u|^{p+1}}{p+1}.\]
As usual, we   see the (NLKG) equation as a first order system of equations
\begin{align} \label{NLKGs}
\partial_t \begin{pmatrix} u \\\partial_t  u\end{pmatrix}
= \begin{pmatrix}  \partial_t u\\ \Delta u -u+ f(u)\end{pmatrix}.
\end{align}
In this framework, we work with vector data $U=(u, \partial_t u)^{\top}$. We use upper-case letters to denote vector valued functions and lower-case letters for scalar functions.

\smallskip

Recall that the corresponding Cauchy problem for \eqref{NLKG} is locally well-posed in $H^s(\R^d) \times H^{s-1}(\R^d)$, for any $s \ge 1$: we refer to Ginibre-Velo \cite{GV85} and Nakamura-Ozawa \cite{NO01} (when $d=2$) for more details.  

Also under the above conditions, the Energy and Momentum (every integral is taken over $\R^d$)
\begin{align}
 \mathcal E[u,u_t] (t) & =  \frac{1}{2}  \int \left[ |\partial_t u(t,x)|^2+ | \nabla u(t,x)|^2 + |u(t,x)|^2 - 2 F(u(t,x))  \right] dx, \label{E} \\
 \mathcal P[u,u_t]  (t) & =  \frac{1}{2} \int \partial_t u(t,x)  \nabla u(t,x) \ dx,\label{P}
\end{align}
are conserved along the flow. In this paper, we will work in the energy space $H^1(\R^d) \times L^2(\R^d)$ endowed with the following scalar product: denote $U =
(u_1,u_2)^\top$, $V = (v_1,v_2)^\top$, and define
\begin{equation}
\label{nor}
\langle U , V \rangle :=  ( u_1 , v_1 ) + (u_2 , v_2 )\quad
\text{where} \quad (u, v) := \int u v dx.
\end{equation}
We will refer to the orthogonality with respect to $\langle \cdot, \cdot \rangle$ as $L^2$-orthogonality (for vector-valued functions).
We also define the energy norm
\begin{align}\label{Nor}
\|U\|^2 := \langle U , U \rangle + (\nabla u_1,  \nabla u_1) =  \| u_1 \|_{H^1}^2 + \| u_2 \|_{L^2}^2.
\end{align}

Looking for stationary solutions $u(t,x)=q(x)$ of \eqref{NLKG} in $H^1(\R^d)$   
we reduce to the  elliptic PDE
\begin{align}\label{q}
- \Delta q + q - f(q) = 0, \quad   q\in H^1(\R^d).
\end{align}
Let us recall well-known results for equation \eqref{q} from \cite{Ca} (see also references therein).
We call  the solutions of \eqref{q} \emph{bound states}; the set of bound states is denoted by $\mathcal B$
\[
\mathcal B = \{ q :  q \text{ is a nontrivial solution of } \eqref{q}\}.
\]
Standard elliptic arguments (see e.g. \cite{GT77} or Theorem~8.1.1 in \cite{Ca}) show that if $q\in \mathcal B$, then $q$ is of class $C^2(\R^d)$ and has exponential decay as $|x|\to +\infty$, as well as its first and second order derivatives.

Let
\[
\mathcal W(u) = \frac 1 2 \int (|\nabla u|^2 + |u|^2 - 2F(u) ) dx.
\]
We call \emph{ground states} the solutions of \eqref{q} that minimize the functional $W$; the set of ground states is denoted by $\mathcal G$
\[
\mathcal G = \{ q_{\rm GS} : q_{\rm GS} \in \mathcal B \text{ and } \mathcal W(q_{\rm GS}) \le \mathcal W(q) \text{ for all } q\in \mathcal B \}.
\]

Ground states  are now well-understood. In particular, it is well-known (Berestycki-Lions \cite{BL}, Gidas-Ni-Nirenberg \cite{GNN}, Kwong \cite{K}, Serrin-Tang \cite{ST}) that there exists a radial positive function $q_{0}$ of class  $C^2$, exponentially decreasing, along with its first and second derivatives, such that
\[ \mathcal G = \{ q_0 (x-x_0) : x_0\in \m R^d\}. \]

In dimension $1$, it is well-known (by ODE arguments) that $\mathcal B = \mathcal G$.
In contrast, for any $d \ge 2$, it is known that $\mathcal G \subsetneq \mathcal B$: see Remark~8.1.16 in \cite{Ca}, we also refer to Ding \cite{D}, where it is proven that $\q B$ (up to translation) is infinite. Functions $q\in \mathcal B\setminus \mathcal G$ are referred to as \emph{excited states}. Few papers in the literature deal with excited states. Here are some references on the construction of such solutions. Berestycki-Lions \cite{BL2} showed the existence of infinitely many radial \emph{nodal} (i.e sign changing) solutions (see also \cite{HV94} and the references therein). More recently, Del Pino, Musso, Pacard and Pistoia constructed in \cite{DMPP11} solutions to the massless version of  equation \eqref{q} with a centered soliton crowned with negative spikes (rescaled solitons) at the vertices of a regular polygon of radius 1; in \cite{DMPP13}, they constructed sign changing, non radial solutions to \eqref{q} on the sphere $\m S^d$ ($d \ge 4$) whose energy is concentrated along special submanifolds of $\m S^d$.



The main difficulty in dealing with excited states in the evolution equation (NLKG) is the lack of information on the linearized operator $-\Delta z + z  - f'(q)z $. Whereas for ground states, it is known that the linearized operator  has a unique simple negative eigenvalue, and a (non degenerate) kernel given by $\Span( \partial_{x_j} q ; \, j=1,\ldots,d )$, the detailed spectral properties of the linearized operator around general bound states are not known. See Section~2 of this paper.

\smallskip

Since \eqref{NLKG} is invariant under \emph{Lorentz boosts}, given a bound state $q$, we can define its \emph{boosted} counterpart, with relative velocity  $\beta = (\beta_1, \dots, \beta_d) \in \R^d$, where $|\beta|<1$ (we denote by $|\cdot|$ the euclidian norm on $\m R^d$) as
\begin{equation}
\label{Qb}
q_\beta(x) := q(\Lambda_\beta(x)),\quad 
\Lambda_\beta(x) :=   x + (\gamma-1) \frac {\beta (\beta\cdot x)}{|\beta|^2} ,\quad
\gamma:=\frac 1{\sqrt{1-|\beta|^2}}.
\end{equation}
Note that the function $q_\beta$ satisfies
\[
-(\Delta-(\beta\cdot\nabla)^2) q_\beta +q_\beta -f(q_\beta) =0.
\]
In particular,  
\[
R(t,x) = \begin{pmatrix} q_\beta(x-\beta t)\\ 
-\beta \cdot \nabla q_\beta (x-\beta t)\end{pmatrix}
\]
is solution of the (first order system form of the) Klein-Gordon equation \eqref{NLKGs}.

\smallskip

It is well known (see e.g. Grillakis-Shatah-Strauss \cite{GSS1}) that the ground state $(q_0,0)$ is unstable in the energy space (this result was known in the Physics literature as the Derrick's Theorem \cite{D}).
For recent works on the instability properties of $q_0$ and on general solutions with energy slightly above $\q E[(q_0,0)]$, we refer to Nakanishi-Schlag \cite{NS1,NS2} and subsequent works. We also refer to Duyckaerts-Merle \cite{DM1}, in the context of the energy critical nonlinear wave equation for related works.

\smallskip

In this paper, we continue the study of the dynamics of large, quantized energy solutions. Specifically, we  deal with  solutions describing multi-bound states for (\ref{NLKG}),
i.e.  solutions $u$ to \eqref{NLKG} defined on a semi-infinite interval of time, such that
\[ u(t,x) \sim \sum_{n=1}^N q_{n,\beta_n}(x-\beta_n t) \quad \text{as} \quad t \to + \infty, \]
for given speeds $\beta_n$ (all distinct).
Such solutions were constructed   in the context of the nonlinear Schr\"odinger equations, the generalized Korteweg-de Vries equations, the Hartree equation, the energy critical wave equation and the water wave system, by Merle \cite{Merle1}, Martel \cite{Martel1}, Martel-Merle \cite{MM06}, C\^ote-Martel-Merle \cite{CMM}, Combet~\cite{Co1,Co2}, Krieger-Martel-Rapha\"el~\cite{KMR09}, Martel-Merle \cite{MM16} and Ming-Rousset-Tzvetkov \cite{MRT15}, both in stable and unstable contexts (see also references in these works).
For  \eqref{NLKG}, the same result was proved by  Côte-Mu\~noz \cite{CoMu}:  there exist  multi-solitary waves based on the ground state, for the whole range of parameters $\beta_1, \dots, \beta_N \in \m R^d$ (two by two distinct), with $|\beta_n|<1$.

\smallskip

We point out that the above results all concern ground states $q \in \q G$, and rely on the complete description of the linearized operator around the ground state in these cases. To our knowledge, the only work related to excited states is by Côte-Le Coz \cite{CoCo}, for the nonlinear Schr\"odinger equation. In this work, the lack of information on the linearized operator is counterbalanced by assuming that solitary waves are well-separated (high-speed assumption).

\smallskip

The main goal of this paper is to extend the construction of multi-solitary waves to  bound state $q\in \mathcal B$ of the (NLKG) equation, \emph{without assumption on the speeds} (besides their being distinct), thus completing Theorem~1 in~\cite{CoMu}, and opening the way to treat such questions for other  models.

\begin{thm}\label{th:1} Let $N\in\m N\setminus\{0\}$, and $\beta_1, \beta_2 ,\ldots ,\beta_N\in \m R^d$ be such that
\[ \forall n, \quad|\beta_n| <1 \quad \hbox{and}\quad  \forall n' \ne n, \quad \beta_{n'} \ne \beta_n. \]
Let $q_1,q_2, \dots ,q_N \in \q B$ be any bound state solution of equation \eqref{q}.

Then there exist  $T_0>0$, $\omega>0$  and a solution $U$ of \eqref{NLKGs}
in the energy space,   defined for $t\ge T_0$, satisfying
\[
\forall t \ge T_0, \quad  \left\| U(t) - \sum_{n=1}^N R_n(t)
\right\| \le e^{-\omega t}
\]
where
\[
R_n(t,x)=
 \begin{pmatrix} q_{n,\beta_n}(x-\beta_n t)
 \\  -\beta_n\cdot \nabla q_{n,\beta_n}(x-\beta_n t)\end{pmatrix}.
\]
\end{thm}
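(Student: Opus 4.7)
The plan is to follow the compactness scheme initiated by Merle and Martel for dispersive equations, and adapted to \eqref{NLKG} in~\cite{CoMu}. Fix an increasing sequence $T_m\to+\infty$, construct solutions $U_m$ of~\eqref{NLKGs} on $[T_0,T_m]$ \emph{backward} from final data at $t=T_m$ close to $\sum_{n=1}^N R_n(T_m)$, prove a uniform exponential bound $\|U_m(t)-\sum_n R_n(t)\|\lesssim e^{-\omega t}$ on $[T_0,T_m]$, and extract a limit $U$ as $m\to\infty$ by local well-posedness together with a compactness argument. The heart of the matter is the uniform bound, which I would obtain by a bootstrap argument. Decompose $U(t)=\sum_n \tilde R_n(t)+Z(t)$, where $\tilde R_n$ denotes $R_n$ with its spatial translation replaced by a time-dependent modulation parameter $\mathbf y_n(t)\in\R^d$. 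The $dN$ parameters are tuned so that $\langle Z(t),\Psi_n^{(j)}(t)\rangle=0$ for $j=1,\dots,d$ and $n=1,\dots,N$, where each $\Psi_n^{(j)}$ is built from $\partial_{x_j} q_n$: these are the \emph{known} translation directions in the kernel of the scalar operator $L_n:=-\Delta+1-f'(q_n)$. Since no other explicit symmetry of~\eqref{NLKG} is available, the modulation is restricted to translations.

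Next, I would introduce a Lyapunov functional adapted to the decomposition, namely a localized combination of the Energy and Momentum along smooth cut-offs $\chi_n$ following each trajectory $x\sim\beta_n t$,
\[
\mathcal H(t):=\sum_{n=1}^N\Bigl(\mathcal E[\chi_n U](t)+\beta_n\cdot \mathcal P[\chi_n U](t)\Bigr),
\]
whose quadratic part in $Z$ reads, up to cross terms, as $\sum_n \mathcal H_n(Z)$, each $\mathcal H_n$ being the Hamiltonian linearized around $\tilde R_n$. The pairwise-distinct velocities and the exponential decay of the $q_n$ force cross terms between distinct solitons to be of size $O(e^{-\omega t})$, so $\mathcal H$ satisfies an almost-conservation identity of the form $|\mathcal H(t)-\mathcal H(T_m)|\lesssim\int_t^{T_m}(\|Z\|^3+e^{-\omega s}\|Z\|)\,ds$ once the modulation equations are accounted for.

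Coercivity of $\sum_n \mathcal H_n$ is the main new difficulty compared to~\cite{CoMu}. By Weyl's theorem and exponential decay of $q_n$, the operator $L_n$ has essential spectrum $[1,+\infty)$, hence its negative eigenspace $Y_n^-$ and its kernel $Y_n^0$ are both finite-dimensional. On $(Y_n^-\oplus Y_n^0)^\perp$ the form $\mathcal H_n$ is coercive; the $d$ orthogonality conditions imposed by modulation remove the translation part of $Y_n^0$. The finitely many remaining bad coordinates $a_n^{(i)}(t)$, coming from $Y_n^-$ and from any non-translation part of $Y_n^0$, must be treated as unstable modes: from the equation on $Z$ and the spectral structure of $L_n$, their evolution reduces, after diagonalization, to a linear ODE system with eigenvalues in $\{\mu_n^{(i)}>0\}$ together with nilpotent blocks at $0$, plus $O(\|Z\|^2+e^{-\omega t})$ forcing. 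A standard Brouwer-type topological argument in dimension $K=\sum_n(\dim Y_n^-+\dim Y_n^0-d)$ then selects the final perturbation at $t=T_m$ so that $a_n^{(i)}(T_m)=0$; combined with the almost-conservation of $\mathcal H$ and coercivity on $(Y_n^-\oplus Y_n^0)^\perp$, this propagates $\|Z(t)\|\lesssim e^{-\omega t}$ backward on $[T_0,T_m]$ and closes the bootstrap.

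The hardest step I expect is the treatment of the non-translation part of $\ker L_n$, if any: unlike in the ground-state case, such directions cannot be modulated away by an explicit symmetry, and on them $\mathcal H_n$ vanishes identically, so they are invisible to the coercivity estimate. The remedy is to integrate them into the topological argument, using that the linearized flow restricted to these directions is nilpotent (thus at worst polynomially growing in time, dominated by the window $e^{-\omega t}$). Keeping this part rigorous, together with carefully deriving the modulation ODEs for $\mathbf y_n$, controlling all cross terms between distant solitons, and ensuring the Brouwer dimension count is uniform in $m$, will constitute the technical core of the proof.
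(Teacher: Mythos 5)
Your overall scheme is the right one (backward construction, uniform bootstrap, compactness, Brouwer for unstable modes), and you have correctly identified the real difficulty: the linearized operator around an excited state may have kernel directions that are not generated by translations. But your proposed remedy for those directions has a genuine gap.

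You suggest modulating only by the $dN$ translation parameters $\mathbf y_n(t)$, and then absorbing the leftover kernel directions into the Brouwer argument on the grounds that the linearized flow there is ``nilpotent, hence at worst polynomially growing.'' This does not close. The topological (Brouwer) argument hinges on a \emph{transversality} estimate at the exit time: one needs a strict sign for $\frac{d}{dt}\bigl(e^{c\omega t}|a^+|^2\bigr)$ at $t=S_\star$, which is available for the genuinely unstable modes because they satisfy $\dot a^\pm_{n,k}\approx \mp\frac{\lambda_{n,k}}{\gamma_n}a^\pm_{n,k}$ with $\lambda_{n,k}>0$. On a nilpotent block there is no such exponential repulsion, so the transversality lemma fails and the shooting argument does not select a good final datum. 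Moreover, the claim of nilpotency of $\opJ\opH$ on those directions is itself unproven; for general bound states and $\beta\neq0$ the generalized-kernel structure of $\opJ\opH$ is exactly the kind of detailed spectral information the paper is trying to avoid assuming.

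The paper's resolution is simpler and is precisely what makes the excited-state case tractable: one does \emph{not} need the degenerate directions to come from a symmetry. Instead of modulating translations, keep the profiles $R_n$ fixed and subtract from $V$ a time-dependent linear combination $\bpsi=\sum_{n,\ell}b_{n,\ell}\psi^0_{n,\ell}$ ranging over the \emph{entire} kernel of $\opH_n$ (not just the translation part), choosing $b$ so that $W:=V-\sum b_{n,\ell}\Psi^0_{n,\ell}$ is $L^2$-orthogonal to all $\Psi^0_{n,\ell}$. The coefficients $b_{n,\ell}$ then obey a differential inequality of the form $|\dot b|\lesssim \|W\|+e^{-10\omega t}+|b|^{p_0}$, which, integrated backward from the tiny value at $t=S_0$, keeps $|b(t)|\lesssim e^{-\omega t}$; no topological argument is needed for them. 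The Brouwer argument is reserved for the genuinely growing coordinates $a^+_{n,k}=\langle V,Z^+_{n,k}\rangle$, exactly as you intend. Finally, the coercivity of $\opH_n$ (the paper's Proposition 2) is stated with projections against $\opH_nY^\pm_{n,k}$ and $\Phi^0_{n,\ell}$ only, and is proved with no information on the negative spectrum of $L_n$ beyond its finite dimension; this replaces the explicit spectral knowledge one has in the ground-state case. Your Lyapunov functional (localized energy plus momentum with cut-offs $\chi_n$) is the same as the paper's $\q F$, so that part of your outline is on track.
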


Using the techniques of this paper, it is possible to extend the main result to more general $H^1$ subcritical nonlinearities.
See e.g. \cite{CoMu} for standard conditions on the nonlinearity.

\smallskip

Recall that for integrable models, like the (KdV) and (mKdV) equations, or the 1D cubic nonlinear Schr\"odinger equation, multi-solitons are explicitely derived from the inverse scattering method. Such solutions are quite special since they are global multi-solitons, both for $t\to \pm \infty$ and describe   elastic collisions of solitons. See e.g. the classical references \cite{KZ,Miura,ZS}.
For nonintegrable equations, in general, the asymptotic behavior as $t\to -\infty$ of multi-bound states as constructed in Theorem~\ref{th:1} is not known.

Recall also that the importance of multi-solitons among all solutions is clearly established by the so-called \emph{soliton resolution conjecture}, which says roughly speaking that any solution of a nonlinear dispersive equation should decomposes in large time as a the sum of certain number of solitons and a dispersive part. See e.g. \cite{Schuur} for a proof in the case of the KdV equation.
We refer to recent works of Duyckaerts, Kenig and Merle \cite{DKM4,DJKM16}, and references therein for general soliton decomposition results in the nonintegrable situation of the energy critical wave equation.

\smallskip

The scheme of the proof is the same as for previous related results, notably \cite{Martel1,CMM,CoMu}: Theorem~\ref{th:1} can be reduced to the existence of solutions to (NLKG) satisfying uniform estimates, that is the following proposition.

\begin{prop}\label{pr:3}
There exist $T_0>0$  and $\omega_0>0$ such that for any $S_0\ge T_0$ there exists $U_0$ such that the solution $U(t)$ of \eqref{NLKGs} with data $U(S_0)=U_0$ is defined in the energy space on the time interval $[T_0,S_0]$ and satisfies
\begin{equation}\label{e:u1}
\forall t\in [T_0,S_0],\quad \left\| U(t) - \sum_{n=1}^N R_n(t)\right\| \le  e^{-\omega_0 t}.
\end{equation}
\end{prop}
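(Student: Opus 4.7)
I would follow the now-standard backward compactness scheme used in \cite{Martel1,CMM,CoMu}: fix $S_0\ge T_0$, prescribe the solution at $t=S_0$ by a suitable ``Ansatz + finitely many free parameters'' initial datum $U_0$, solve \eqref{NLKGs} backwards in time, and show that on the interval $[T_0,S_0]$ the estimate \eqref{e:u1} holds uniformly in $S_0$. More precisely, I would introduce a bootstrap time
\[
T^\star = T^\star(S_0) := \inf\Bigl\{\,T\in [T_0,S_0]\;:\;\forall t\in [T,S_0],\ \bigl\|U(t)-\textstyle\sum_{n} R_n(t)\bigr\|\le e^{-\omega_0 t}\,\Bigr\},
\]
and upgrade this rough control to a strict bound $\le \tfrac12 e^{-\omega_0 t}$ on $[T^\star,S_0]$, which by continuity forces $T^\star=T_0$ provided the free parameters are chosen appropriately.

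\textbf{Modulation and orthogonality.} On $[T^\star,S_0]$ I would write
\[
U(t,x)=\sum_{n=1}^N R_n(t,x;\alpha_n(t))+\varepsilon(t,x),
\]
where $\alpha_n(t)$ are modulated translation and boost parameters attached to each bound state $q_n$. These parameters are chosen to impose $L^2$-orthogonality of $\varepsilon$ to the natural geometric directions generated by each $R_n$ (translations, and the Lorentz boost generators), exactly as in \cite{CoMu}. Standard arguments then give ODEs of the form $|\dot\alpha_n|\lesssim \|\varepsilon\|+e^{-\omega t}$, with exponentially decaying interaction terms owing to the separation $|\beta_n-\beta_{n'}|t\to\infty$ and the exponential decay of bound states recalled after \eqref{q}.

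\textbf{Energy--momentum functional and coercivity.} The core estimate is obtained by working with a localized Lyapunov functional of the form
\[
\q H(t) = \sum_{n=1}^N \bigl(\q E[U]+\beta_n\cdot \q P[U]\bigr)\chi_n(t,x),
\]
where $\chi_n$ are smooth cut-offs following each soliton trajectory and separating them. Differentiating $\q H$ in time and using \eqref{NLKGs}, the leading terms cancel thanks to the elliptic equation \eqref{q} satisfied by each $q_{n,\beta_n}$, and one is left with an estimate $|\frac{d}{dt}\q H|\lesssim e^{-\omega t}\|\varepsilon\|^2+e^{-\omega t}$. The quadratic part of $\q H-\q H(\infty)$ is governed, on each soliton, by the Lorentz-boosted linearized operator around $q_n$. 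Here the absence of detailed spectral information (Section~2) forces me to use only the following soft facts: this operator is self-adjoint on a suitable Hilbert space, has finitely many negative eigenvalues (say $K_n$), and its kernel contains the modulation directions. Hence one has a coercivity bound of the type
\[
\q H(t)-\q H(\infty)\ \ge\ c\|\varepsilon(t)\|^2 - C\sum_{n,k}\langle \varepsilon(t),Y_{n,k}\rangle^2 - C e^{-2\omega t},
\]
modulo the $K:=\sum_n K_n$ unstable directions $Y_{n,k}$.

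\textbf{Closing the bootstrap by a topological shooting.} Integrating the monotonicity of $\q H$ from $t$ to $S_0$, together with the coercivity above, controls $\|\varepsilon(t)\|$ by $e^{-\omega t}$ \emph{provided} the $K$ projections $a_{n,k}(t):=\langle \varepsilon(t),Y_{n,k}\rangle$ satisfy the same bound. The $a_{n,k}$'s solve ODEs of the form $\dot a_{n,k}=\mu_{n,k} a_{n,k}+O(\|\varepsilon\|^2+e^{-\omega t})$ with $\mu_{n,k}>0$, so backward in time they are unstable. The free parameters in $U_0$ are precisely the initial values of these projections at $t=S_0$: choosing them in a ball of $\m R^K$ and using a Brouwer-type argument as in \cite{CoMu,MM06}, one shows that there exists an initial choice for which $|a_{n,k}(t)|\le e^{-\omega t}$ on the whole $[T_0,S_0]$, at which point the bootstrap closes with $\omega_0<\omega$. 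The main obstacle, and the place where this proof differs from \cite{CoMu}, is precisely this step: since excited states may have several negative eigenvalues and a generalized kernel of unknown dimension, the shooting has to be set up in the correct finite-dimensional space identified in Section~2, and the coercivity of $\q H$ must be established using only the abstract spectral information available there.
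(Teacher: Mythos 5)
Your overall scheme (backward integration from $S_0$, bootstrap, localized energy--momentum functional, Brouwer shooting for the unstable directions) matches the paper's, but your treatment of the degenerate directions has a genuine gap that is precisely what the paper's proof is designed to circumvent. You propose to \emph{modulate} translation and boost parameters $\alpha_n(t)$ so that $\varepsilon$ is $L^2$-orthogonal to the directions generated by the symmetries of $R_n$. This furnishes only $d$ orthogonality conditions per soliton (boost and translation directions are linearly dependent after the Lorentz change of variables), but for an excited state $q_n$ the kernel of $\opL_n$ has dimension $\bar\ell_n\ge d$ and can be strictly larger, with the extra directions \emph{not} attached to any continuous symmetry of the equation. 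Those extra kernel directions appear in the coercivity estimate for the localized energy (Proposition~\ref{prop:coer_H}) and cannot be eliminated by your modulation; left uncontrolled, they generate secular growth and the bootstrap will not close. You acknowledge the issue in your last sentence, but you cannot fix it by ``setting up the shooting correctly'': the kernel corresponds to zero eigenvalues of $\opJ\opH$, and zero modes are not backward-unstable, so Brouwer's argument gives no leverage there. The paper's remedy is to abandon geometric modulation altogether: it writes $V=W+\sum_{(n,\ell)\in I^0} b_{n,\ell}\Psi^0_{n,\ell}$ along a basis of the full kernel with \emph{free scalar coefficients} $b_{n,\ell}$ chosen by a plain Gram inversion (Lemma~\ref{le:Wbis}), inserts $\sum\dot b_{n,\ell}\Psi^0_{n,\ell}$ into the equation for $W$, and shows separately that $|\dot b|$ and hence $|b|$ stay small under the bootstrap (Lemma~\ref{le:dotb}). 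This is one of the two ``new points'' announced after Proposition~\ref{pr:3}.

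Two smaller points. First, the coercivity you write down subtracts only the unstable projections $\langle\varepsilon,Y_{n,k}\rangle^2$; the actual statement (Proposition~\ref{prop:coer_H}) must also subtract the kernel projections $\langle V,\Phi^0_\ell\rangle^2$, and the unstable directions must appear through $\langle\opH V,Y^\pm_k\rangle^2$, with both signs $\pm$. Second, the quantities you track in the shooting should be $a^\pm_{n,k}=\langle V,Z^\pm_{n,k}\rangle$ with $Z^\pm_{n,k}=\opH_n Y^\pm_{n,k}$ (eigenfunctions of the adjoint $\opH\opJ$), not $\langle\varepsilon,Y_{n,k}\rangle$; otherwise the linear part of $\dot a$ is not a clean multiple of $a$ (since $\opH\opJ Y^\pm$ is not proportional to $Y^\pm$), and the transversality lemma does not come out as stated.
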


Indeed, let $S_m \to +\infty$, and assuming that Proposition \ref{pr:3} holds, let $U_m$ be one solution to (NLKG) satisfying the uniform estimates \eqref{e:u1} on the time interval $[T_0,S_m]$. Using the compactness arguments of Section~4 of \cite{CoMu}, one observes that $(U_m(T_0))_{m \in \m N}$ has a weak-$H^1 \times L^2$ limit $U_0^*$. Then consider the solution $U^*$ to (NLKG) with data $U_0^*$ at time $T_0$: the key feature is that the flow of (NLKG) is continuous for the weak-$H^1 \times L^2$ topology, and this allows to conclude that $U^*$ is the desired multi-soliton. We refer to  \cite[Section 4]{CoMu} for further details.

\smallskip

We are therefore left with solely the proof of Proposition \ref{pr:3}, to which the remainder of this paper is devoted.
To prove it for any bound state, we use two new points:
\begin{enumerate}
\item  a general coercivity argument with no a priori knowledge of the spectral properties of the linearized operator (see Section~2);
\item a simplification of the existence proof so as to deal with possibly multiple degenerate directions, not related to translation invariance (see Section~3).
\end{enumerate}

\section{Spectral theory for bound states}

We consider a bound state $q\in \mathcal B$, a velocity $\beta\in \R^d$, $|\beta|<1$, and the corresponding Lorentz state $q_\beta$ defined by \eqref{Qb}. In this section we are interested in the linearized flow around the solution $R(t,x)$ of \eqref{NLKGs}
\[ R(t,x) = \begin{pmatrix} 
q_\beta(x-\beta t) \\
-\beta \cdot \nabla q_\beta(x-\beta t)
\end{pmatrix}. \]
Define the matrix operator
\[  \opH = \begin{pmatrix} - \Delta + 1 - f'(q_\beta) & - \beta \cdot \nabla \\
  \beta\cdot \nabla & 1 
\end{pmatrix} \quad\hbox{and}\quad \opJ = \begin{pmatrix} 0 & 1 \\
- 1 & 0 
\end{pmatrix}. \]
The (NLKG) equation around $R$, i.e. for solutions of the form 
\[ U(t,x) = R(t,x) + V (t,x-\beta t )\]
where $V$ is a  small perturbation, rewrites as
\begin{equation}\label{eq.V}
 \partial_t V = \opJ\opH V + \q N(V) ,
\end{equation}
where $\q N(V)$ denotes nonlinear terms in $V$.

\subsection{Spectral analysis of $\opJ\opH$}

First, following \cite{CoMu}, we study  the spectral properties of the operator $\opJ\opH$ appearing in 
equation \eqref{eq.V}
\[ 
 \opJ\opH = \begin{pmatrix}
 \beta \cdot \nabla & 1   
 \\ \Delta - 1 + f'(q_\beta) & \beta\cdot \nabla 
\end{pmatrix} \]
 in terms of the spectral properties of  the elliptic operator
\[ \opL =  - \Delta + 1 - f'(q). \]

\begin{lem}\label{lem:L_JH}
{\rm (i)} \emph{Spectral properties of $\opL$.} The self-adjoint operator 
$\opL$ has essential spectrum $[1, +\infty)$, a finite number $\bar k\ge 1$ of negative eigenvalues (counted with multiplicity) and   its kernel is of finite dimension $\bar \ell\ge d$. Let $(\phi_k)_{k=1, \dots, \bar k}$ be an $L^2$-orthogonal family of eigenfunctions of $\opL$ with negative eigenvalues $(-\lambda_k^2)_{k=1, \dots, \bar k}$, and $(\phi^0_\ell)_{\ell = 1, \dots, \bar \ell}$ be an $L^2$-orthogonal basis of $\ker(\opL)$, i.e.
\begin{align} 
\opL \phi_k &= - \lambda_k^2 \phi_k, \quad \lambda_k >0, & k =1, \dots, \bar k
\\
\opL \phi^0_\ell &= 0, &  \ell = 1, \dots, \bar \ell.
\end{align}
Then, there exists $c >0$ such that for any $v \in H^1$ satisfying $(v,\phi_k) = (v,\phi^\ell_0) =0$ for all $k =1, \dots, \bar k$, $\ell = 1, \dots, \bar \ell$, the following holds
\begin{align} \label{eq:coer_L}
(\opL v,v) \ge c \| v \|_{H^1}^2.
\end{align}
{\rm (ii)} \emph{Spectral properties of $\opJ\opH $.}
For $k=1, \dots, \bar k$ and $\ell =1 , \dots, \bar \ell$ and signum $\pm$, let
\begin{align}
Y_k^\pm(x) & = e^{\mp \gamma \lambda_k \beta \cdot x} \begin{pmatrix}
\phi_k \\
- \gamma \beta \cdot \nabla \phi_k \pm \gamma \lambda_k \phi_k \\
\end{pmatrix} (\Lambda_\beta(x)), \label{d.Yk}\\
 \Phi^0_\ell(x) & = \begin{pmatrix}
\phi^0_\ell \\
-\gamma \beta \cdot \nabla \phi^0_\ell \\
\end{pmatrix} (\Lambda_\beta(x)). \label{d.psi0}
\end{align}
Then
\begin{align} &  (\opJ\opH ) Y_k^\pm = \pm \frac{\lambda_k}{\gamma} Y_k^\pm,\label{e.Y}\\ & 
\ker \opH  = \ker (\opJ\opH ) =\Span(\Phi^0_\ell, \ell=1, \dots, \bar \ell).\label{e.ker}\end{align} 
Moreover, 
\begin{equation}\label{o.Y}
\langle \opH Y_k^+,Y_{k'}^+\rangle = \langle \opH Y_k^-,Y_{k'}^-\rangle =0 \quad \hbox{for all $k,k'=1,\ldots,\bar k$.}
\end{equation}
Finally, the family $(Y_k^\pm)_{\pm, k=1,\dots, \bar k}$ is linearly independent. As a consequence, the family $(\opH Y_k^\pm)_{\pm, k=1,\dots, \bar k}$  is linearly independent.

\smallskip

{\rm (iii)} \emph{Exponential decay.} There exist $C_0>0$, $\omega_0 >0$, such that 
for all $\alpha\in \N^d$, $|\alpha|\le 1$, for all $x\in \R^d$,
\begin{equation}\label{expo}
|\partial_\alpha q(x)| + 
|\partial_\alpha \phi_k(x)| + |\partial_\alpha Y_{k}^\pm(x)|+ |\partial_\alpha \phi_\ell^0(x)|
\le C_0 e^{-\omega_0 |x|}.
\end{equation}
\end{lem}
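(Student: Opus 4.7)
My plan is to handle the three parts of the lemma in turn.

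\textbf{For part (i)}, I would first observe that $q$ decays exponentially, so multiplication by $f'(q)=p|q|^{p-1}$ is a relatively compact perturbation of $-\Delta+1$ on $L^2$. Weyl's essential-spectrum theorem then gives $\sigma_{\mathrm{ess}}(\opL)=[1,+\infty)$, so the spectrum below $1$ is discrete with finite multiplicity, yielding finitely many negative eigenvalues and a finite-dimensional kernel. Existence of at least one negative eigenvalue comes from pairing $\opL$ with $q$ itself: using \eqref{q}, $(\opL q,q) = (1-p)\int |q|^{p+1}<0$. Differentiating \eqref{q} shows $\partial_{x_j}q\in\ker\opL$ for $j=1,\dots,d$, and these are linearly independent in $L^2$ (a nontrivial dependence would force $q$ to be translation-invariant in some direction, forbidden by $q\in L^2\setminus\{0\}$), so $\bar\ell\ge d$. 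The coercivity \eqref{eq:coer_L} is the classical spectral-gap statement $(\opL v,v)\gtrsim \|v\|_{L^2}^2$ on the orthogonal complement of the non-positive eigenspaces, upgraded to $H^1$ via the identity $(\opL v,v)=\|v\|_{H^1}^2-\int f'(q)v^2$ and the $L^\infty$ bound on $f'(q)$.

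\textbf{For part (ii)}, the identity \eqref{e.Y} is a direct (if careful) computation: after the change of variables $y=\Lambda_\beta(x)$ in $\opJ\opH Y_k^\pm$, using $\gamma^2(1-|\beta|^2)=1$ and $\opL\phi_k=-\lambda_k^2\phi_k$, the contributions of the differential operator and those of the exponential prefactor combine to $\pm(\lambda_k/\gamma)Y_k^\pm$. The same computation with $\lambda=0$ and $\phi^0_\ell\in\ker\opL$ gives $\opH\Phi^0_\ell=0$; conversely, if $\opH V=0$, pulling back by $\Lambda_\beta$ forces the first component into $\ker\opL$ and determines the second, yielding $\ker\opH=\Span(\Phi^0_\ell)$. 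Since $\opJ$ is an invertible matrix, $\ker(\opJ\opH)=\ker\opH$. For \eqref{o.Y}, rewrite $\opH Y_k^\pm=\mp(\lambda_k/\gamma)\opJ Y_k^\pm$ and evaluate $\langle\opH Y_k^+,Y_{k'}^+\rangle$ in two ways using self-adjointness of $\opH$ and antisymmetry of $\opJ$; the two expressions force $(\lambda_k+\lambda_{k'})\langle\opJ Y_k^+,Y_{k'}^+\rangle=0$, and hence the pairing vanishes. Linear independence of $(Y_k^\pm)$ follows by grouping terms by the exponential factor $e^{\mp\gamma\lambda_k\beta\cdot x}$: distinct exponents separate asymptotically along $\beta$, and within a fixed exponent the $L^2$-orthogonality of $(\phi_k)$ concludes. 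The corresponding statement for $(\opH Y_k^\pm)$ follows by applying $\opJ\opH$ to any dependence relation, since $\ker\opH\cap\Span(Y_k^\pm)=\{0\}$ and $\lambda_k>0$.

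\textbf{For part (iii)}, the decay of $q$ is already stated. For $\phi_k$, the equation $(-\Delta+1+\lambda_k^2)\phi_k=f'(q)\phi_k$ with exponentially decaying right-hand side yields $|\phi_k(x)|\lesssim e^{-\mu|x|}$ for any $\mu<\sqrt{1+\lambda_k^2}$ by a standard Agmon-type comparison argument, and similarly for $\phi^0_\ell$ with $\mu<1$; elliptic regularity transfers the decay to first-order derivatives. For $Y_k^\pm$ I must show the prefactor $e^{\mp\gamma\lambda_k\beta\cdot x}$ is dominated by the decay of $\phi_k\circ\Lambda_\beta$: along $\beta$, $\Lambda_\beta$ dilates by the factor $\gamma$, so $\phi_k\circ\Lambda_\beta$ decays at rate $\gamma\sqrt{1+\lambda_k^2}$ while the prefactor grows at rate $\gamma\lambda_k|\beta|$. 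The condition $|\beta|<1$ gives $\sqrt{1+\lambda_k^2}>\lambda_k|\beta|$, leaving a strictly positive net rate; transverse to $\beta$ the prefactor is trivial, so a uniform $\omega_0>0$ can be chosen.

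The main obstacle I anticipate is the algebraic verification of \eqref{e.Y} — tracking the Lorentz factors in both components through the change of variables is fiddly, though routine. A secondary difficulty is the linear independence of $(Y_k^\pm)$ when some $\lambda_k^2$ are degenerate eigenvalues of $\opL$, which forces the two-step separation first by exponent and then by $L^2$-orthogonality of $(\phi_k)$.
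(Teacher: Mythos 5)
Your plan for parts (i) and (iii) matches the paper's proof essentially step by step: compact perturbation of $-\Delta+1$ for the essential spectrum, $(\opL q,q)=(1-p)\int|q|^{p+1}<0$ for $\bar k\ge 1$, $\partial_{x_j}q\in\ker\opL$ for $\bar\ell\ge d$, the spectral theorem for the $L^2$ gap upgraded to $H^1$ coercivity via boundedness of $f'(q)$, and Agmon-type decay with the elementary rate comparison $\sqrt{1+\lambda_k^2}>\lambda_k|\beta|$. For \eqref{e.Y} and \eqref{e.ker} you rely, as the paper does, on the substitution $\rho_1(x)=e^{-\gamma^2\lambda\beta x_1}\sigma_1(\gamma x_1,x')$ reducing the eigenvalue problem to $\opL\sigma_1=-\gamma^2\lambda^2\sigma_1$. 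Your derivation of \eqref{o.Y}, starting from $\opH Y_k^\pm=\mp(\lambda_k/\gamma)\opJ Y_k^\pm$ and using self-adjointness of $\opH$ and antisymmetry of $\opJ$, is a trivially equivalent bookkeeping of the paper's computation.

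The one genuine gap concerns the linear independence of $(Y_k^\pm)_{\pm,k}$. You propose to group terms ``by the exponential factor $e^{\mp\gamma\lambda_k\beta\cdot x}$'' and argue that ``distinct exponents separate asymptotically along $\beta$, and within a fixed exponent the $L^2$-orthogonality of $(\phi_k)$ concludes.'' This is not a working argument as stated. First, the $Y_k^\pm$ all decay exponentially at infinity (by your own part (iii)), so the exponential prefactors do not induce any asymptotic separation of the combined functions $e^{\mp\gamma\lambda_k\beta\cdot x}\phi_k(\Lambda_\beta x)$; making such a separation rigorous would require sharp two-sided Agmon asymptotics for the $\phi_k$, which you neither state nor have. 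Second, and more decisively, when $\beta=0$ every prefactor is identically $1$, so the grouping is vacuous: the first components of $Y_k^+$ and $Y_k^-$ coincide (both equal $\phi_k$), and $L^2$-orthogonality of the $(\phi_k)$ only shows $a_k^++a_k^-=0$ but cannot show $a_k^\pm=0$; one must look at the second component, which your argument never invokes. The paper avoids all of this by using the abstract fact that $(Y_k^+)$ and $(Y_k^-)$ lie in spectral subspaces of $\opJ\opH$ attached to distinct eigenvalues $\pm\lambda_k/\gamma$, so these subspaces are in direct sum; after that reduction the $L^2$-orthogonality of $(\phi_k)$ within each fixed $\lambda_k$ closes the argument uniformly in $\beta$, including $\beta=0$. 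You should replace your exponential-separation heuristic with this spectral-subspace argument (which you in fact already have available, since you prove \eqref{e.Y} first).
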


\begin{proof}
 We start by noticing that by rotation (with first vector $\beta/|\beta|$ for $\beta \ne 0$), we can assume that the Lorentz boost is of the form $(\beta,0, \dots, 0)$, where (with slight abuse of notation) $\beta \in (-1,1)$.
 Observe that in this case $\Lambda_\beta(x) = (\gamma x_1, x')$, where $x=(x_1,x'),$ $x' = (x_2, \dots, x_d)$.
%

\smallskip

(i) The operator 
$\opL$ is a compact perturbation of $-\Delta + 1$, and so the two operators have the  same essential spectrum $[1,+\infty)$.  In particular, for any $\delta>0$, both operators have a finite number of eigenvalues (counting their multiplicities) on $(-\infty, 1-\delta]$. We define 
$(\phi_k)_{k=1, \dots, \bar k}$,
 $(\lambda_k)_{k=1, \dots, \bar k}$, and $(\phi^0_\ell)_{\ell = 1, \dots, \bar \ell}$
 as in the statement of the lemma.
From the spectral theorem, the following coercivity holds: there exists $c'>0$ such that for any $v \in H^1$ satisfying $(v,\phi_k) = (v,\phi^\ell_0) =0$ for all $k =1, \dots, \bar k$, $\ell = 1, \dots, \bar \ell$, we have
$(\opL v,v) \ge c' \| v \|_{L^2}^2.$
Since $f'(q)$ is bounded, a standard argument proves that 
the coercivity property \eqref{eq:coer_L} holds. 

Note that by direct computations $(\opL q , q) = (1-p) \int |q|^{p+1}<0$ and thus $\bar k\ge 1$. Moreover, it is clear by differentiating \eqref{q} with respect to  $x_j$ that $\partial_j q \in \ker \opL$. Since the family $(\partial_j q)_{j=1,\ldots, d}$ is linearly independent, we obtain $\bar \ell \ge d$.

\smallskip

(ii) Looking for an eigenfunction $Y = (\rho_1,\rho_2)^\top$ of the operator $\opJ\opH $, with eigenvalue $\lambda$,    we are led to   the system
\begin{align*}
\begin{cases}
\beta \partial_1 \rho_1 + \rho_2 = \lambda \rho_1, \\
(\Delta - 1 + f'(q_\beta)) \rho_1 + \beta \partial_1 \rho_2= \lambda \rho_2.
\end{cases}
\end{align*}
The first equation gives  $\rho_2 = (\lambda -\beta \partial_1) \rho_1 $ which we   plug into the second equation:
\[ (- \Delta + 1 - f'(q_\beta) ) \rho_1 +  (\lambda- \beta \partial_1 )^2 \rho_1 =0, \]
which rewrites
\[ -(1-\beta^2) \partial_{11}\rho_1 - \Delta' \rho_1 -2 \lambda \beta \partial_1 \rho_1 +  \rho_1 - f'(q_\beta) \rho_1 = - \lambda^2 \rho_1 \]
where $\Delta'$ is the Laplace operator with respect to the   variable $x' = (x_2, \dots, x_d)$. 
Write \[\rho_1(x) = e^{-\gamma^2 \lambda \beta x_1} \sigma_1 (\gamma x_1, x')
= e^{-\gamma \lambda \beta y_1} \sigma_1 (y), \quad y=(\gamma x_1,x').\] 
Then the equation on $\rho_1$ rewrites  as follows
\begin{multline*}
  e^{-\gamma \lambda \beta y_1} \left[ -(1-\beta^2) \left( \gamma^2  \partial_{11}^2 - 2 \gamma^2 \lambda \beta \gamma \partial_{1} + (\gamma^2 \lambda \beta)^2 \right) \right. \\
 \left. - \Delta'  - 2 \lambda \beta (\gamma \partial_{1} - \gamma^2 \lambda \beta) + (1+\lambda^2) - f'(q) \right] \sigma_1(y) =0, \end{multline*}
  which simplifies as
\[
 -\Delta \sigma_1 + \sigma_1 - f'(q) \sigma_1 =   - \gamma^2 \lambda^2 \sigma_1. 
\]
Therefore $\sigma_1$ has to be an eigenfunction of $\opL$ with   eigenvalue $- \gamma^2 \lambda^2  \le 0$.

Reciprocally, if $\sigma_1 = \phi_k$ and $\lambda = \lambda_k/\gamma$, then  
\[ e^{-\gamma^2 \lambda \beta  x_1} \begin{pmatrix} \phi_k \\
 - \gamma \beta \partial_1 \phi_k + \lambda \gamma^2 \phi_k
\end{pmatrix} (\gamma x_1,x') \]
is an eigenfunction of  $\opJ\opH $ with eigenvalue $\lambda$.

Let us check \eqref{o.Y}:
\begin{align*}
\frac{\lambda_{k'}}{\gamma} \langle \opH Y_k^{+}, Y_{k'}^{+}\rangle &=\langle \opH Y_k^+, \opJ\opH  Y_{k'}^+\rangle =-\langle \opJ\opH  Y_{k}^+, \opH Y_{k'}^+\rangle \\
&= - \frac{\lambda_{k}}{\gamma} \langle Y_k^+,\opH Y_{k'}^+\rangle =- \frac{\lambda_{k}}{\gamma} \langle \opH  Y_k^+,Y_{k'}^+\rangle.
\end{align*}
Since $\lambda_k,\lambda_{k'}>0$, this implies $\langle \opH  Y_k^+,Y_{k'}^+\rangle=0$.
All these computations are similar for $(Y_k^-)_k$.

Also observe that if $\lambda_k \ne \lambda_{k'}$, then $\langle \opH Y_k^{+}, Y_{k'}^- \rangle=0$ with the same argument.

Let us now prove that the $(Y_k^\pm)_{\pm, k=1, \dots, \bar k}$ are linearly independent. Assume that we have the dependence relation:
\[ \sum_{k=1}^{\bar k} ( a_k^+ Y_k^+ + a_k^- Y_k^-) =0. \]
Fix $k_* \in \{ 1, \dots, \bar k \}$, and let $I_{k_*}$ be the set of indices $k \in \{ 1, \dots, \bar k \}$ such that $\lambda_k = \lambda_{k_*}$. As spectral spaces associated to different eigenvalues are in direct sum, we infer that
\[ \sum_{k \in I_{k_*}} a_k^+ Y_k^+ =0, \quad \text{and} \quad \sum_{k \in I_{k_*}} a_k^- Y_k^- =0. \]
In the first equality, the first line writes 
\[ e^{-\gamma \lambda_k \beta \cdot x} \sum_{k \in I_{k_*}} a_k^+ \phi_k (\Lambda_\beta x) =0. \]
As $\Lambda_\beta$ is one-to-one, this means that $\sum_{k \in I_{k_*}} a_k^+ \phi_k =0$ and by linear independence of the $(\phi_k)_{k=1, \dots, \bar k}$, this relation is trivial: $a_k^+=0$ for all $k \in I_{K_*}$, and in particular $a_{k_*}^+ =0$. A similar argument on the second equality gives that $a_{k_*}^- =0$.
Therefore, the dependence relation is trivial, and the $(Y_k^\pm)_{\pm, k=1, \dots, \bar k}$ are linearly independent.

As they are eigenfunctions for $\opJ \opH$ with non zero eigenvalue, we infer that the family $(\opJ \opH Y_k^\pm)_{\pm, k=1, \dots, \bar k}$ is linearly independent. As $\opJ$ is one-to-one (it is an involution), the $(\opH Y_k^\pm)_{\pm, k=1, \dots, \bar k}$ are linearly independent as well.
\smallskip

(iii) The exponential decay of any bound state $q$ and its derivates is well known, and follows from Agmon type estimates; we refer to \cite{GT77}.

By standard elliptic arguments, we first note that 
there exist $C>0$  such that 
for all $\alpha\in \N^d$, $|\alpha|\le 2$, 
\begin{equation}\label{expobis}
\forall x \in \m R^d, \quad |\partial_\alpha \phi_k(x)|\le C  e^{-(1+\lambda_k^2)^{\frac 12} |x|},\quad 
 |\partial_\alpha \phi_\ell^0(x)| \le C e^{-|x|} .
\end{equation}
This, and the definition of $Y_k^{\pm}$ in \eqref{d.Yk} is enough to prove \eqref{expo}.
\end{proof}

\subsection{Spectral analysis of $\opH$}

The eigenfunctions $Y_k^\pm$ of $\opJ\opH$ are related to equation \eqref{eq.V},
as well as the eigenfunctions $\opH Y_k^\pm$  of the adjoint operator $\opH \opJ$.
In particular, it is straightforward to compute the main order time evolution of the projection of the perturbation $V$ on such directions (see Lemma~\ref{le:dota}). However, in order to study stability properties of the flow using energy method (see next Section), the relevant operator 
 turns out to be $\opH$.

The operator $\opH$ is self-adjoint for the $\langle \cdot, \cdot \rangle$ scalar product and we already know from Lemma~\ref{lem:L_JH} that 
\begin{equation} \label{kerH}
\ker \opH = \Span(\Phi^0_\ell, \ell=1, \dots, \bar \ell),\end{equation}
where the vector-valued functions $\Phi^0_\ell$ are defined in \eqref{d.psi0}. However, unlike for $\opJ\opH$, the eigenfunctions of $\opH$ related to negative eigenvalues do not seem to be explicitly related to that of $\opL$.

Nonetheless a key observation of this paper is that for any $\beta\in \R^d$, $|\beta|<1$,  the number of negative directions for the quadratic form $\langle \opH \cdot, \cdot \rangle$ is equal to  the number $\bar k$ of negative eigenvalues of the operator $\opL$.

\begin{lem} \label{lem:H}
The self-adjoint operator $\opH$ has a finite number $\bar m\ge 1$ of negative eigenvalues (counted with multiplicity).
Let $(\Upsilon_m)_{m = 1, \dots, \bar m}$   be an $L^2$-orthogonal family of eigenfunctions of $\opH$ with negative eigenvalues, normalized so that
\begin{align}
 & \opH \Upsilon_m   = -\mu_m^2 \Upsilon_m, \quad \mu_m >0, \quad  m=1, \dots, \bar m \label{eigenH}\\
& \langle \opH \Upsilon_m, \Upsilon_m \rangle  = -1,  \quad \langle \Upsilon_m, \Upsilon_{m'} \rangle = 0 \quad \text{for } m \ne m'. \label{PsiH}
\end{align}
Then the following holds 
\[\bar m=\bar k.\] 
Moreover,   there exists $c>0$ such that for all $V \in H^1 \times L^2$,
\begin{gather} \label{Hcoer} \langle \opH V, V \rangle \ge c\| V \|^2 - \frac{1}{c} \sum_{m=1}^{\bar m} \langle  V, \Upsilon_m \rangle^2 - \frac{1}{c} \sum_{\ell=1}^{\bar \ell} \langle V, \Phi^0_\ell \rangle^2.
\end{gather}
\end{lem}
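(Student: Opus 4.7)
My plan is to reduce the spectral analysis of $\opH$ on $H^1 \times L^2$ to that of $\opL$ on $H^1$, whose spectrum is already controlled by part~(i) of Lemma~\ref{lem:L_JH}. The bridge will be an identity I would derive first: writing $V = (v_1, v_2)^\top$, expanding $\langle \opH V, V\rangle$, completing the square in $v_2$ (using the integration by parts $\int (\beta\cdot\nabla v_1)^2 \, dx = -((\beta\cdot\nabla)^2 v_1, v_1)$), and then performing the Lorentz change of variable $w(y) := v_1(\Lambda_\beta^{-1}(y))$---under which $-\Delta + (\beta\cdot\nabla)^2 + 1 - f'(q_\beta)$ transforms into $\opL$ and $dx = \gamma^{-1}\,dy$---yields the pivotal identity
\[
\langle \opH V, V \rangle \;=\; \frac{1}{\gamma}(\opL w, w) + \|v_2 + \beta\cdot\nabla v_1\|_{L^2}^2.
\]

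With this identity, $\bar m = \bar k$ follows from the min-max characterization of the negative index of a quadratic form. For $\bar m \ge \bar k$, I would send $w$ in the $\bar k$-dimensional negative subspace $\Span(\phi_1, \dots, \phi_{\bar k})$ of $\opL$ to $V_w := (w \circ \Lambda_\beta,\, -\beta\cdot\nabla(w \circ \Lambda_\beta))^\top$: the identity yields $\langle \opH V_w, V_w\rangle = \gamma^{-1}(\opL w, w) < 0$, producing a $\bar k$-dimensional subspace on which $\opH$ is negative definite. Conversely, on any subspace $F \subset H^1 \times L^2$ where $\langle \opH\cdot,\cdot\rangle$ is negative definite, the map $V \mapsto v_1$ is injective (otherwise $v_1 = 0$ would force $\langle \opH V, V\rangle = \|v_2\|_{L^2}^2 \ge 0$, hence $V = 0$), and the identity then forces $(\opL w, w) \le \gamma\langle \opH V, V\rangle < 0$ on the image, so $\dim F \le \bar k$. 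In particular $\bar m$ is finite and equals $\bar k$.

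For the coercivity~\eqref{Hcoer}, my plan has three stages. First, since $\opH$ is a relatively compact perturbation (via the exponential decay of $f'(q_\beta)$) of a constant-coefficient operator whose Fourier symbol is bounded below by $\gamma^{-2} > 0$, Weyl's theorem puts the essential spectrum of $\opH$ in $[\gamma^{-2}, +\infty)$, and the spectral theorem yields $\sigma > 0$ such that $\langle \opH V, V\rangle \ge \sigma \|V\|_{L^2}^2$ whenever $V$ is $L^2$-orthogonal to each $\Upsilon_m$ and $\Phi^0_\ell$. Second, I would upgrade this to the $\|\cdot\|$-norm by applying Young's inequality $2 |(\beta\cdot\nabla v_1, v_2)| \le \epsilon \|\nabla v_1\|_{L^2}^2 + \frac{|\beta|^2}{\epsilon}\|v_2\|_{L^2}^2$ with $\epsilon \in (|\beta|^2, 1)$ (available because $|\beta| < 1$) and absorbing $\|f'(q_\beta)\|_\infty \|v_1\|_{L^2}^2$ by a large constant $C$, yielding $\langle \opH V, V\rangle + C \|v_1\|_{L^2}^2 \ge c_1 \|V\|^2$; combined with the first stage, this gives full coercivity on the orthogonal complement. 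Third, for a general $V$, I would decompose $V = V_\perp + \sum_m a_m \Upsilon_m + \sum_\ell b_\ell \Phi^0_\ell$, use the $L^2$-orthogonality of the basis to express $a_m, b_\ell$ in terms of $\langle V, \Upsilon_m\rangle$ and $\langle V, \Phi^0_\ell\rangle$, and conclude from $\langle \opH V, V\rangle = \langle \opH V_\perp, V_\perp\rangle - \sum_m a_m^2$ (using $\Phi^0_\ell \in \ker \opH$) together with $\|V\|^2 \le 2 \|V_\perp\|^2 + 2 \|V - V_\perp\|^2$.

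The main obstacle---and the reason the argument is new---is the equality $\bar m = \bar k$: unlike for ground states, no explicit description of the negative eigenfunctions $\Upsilon_m$ in terms of $\phi_k$ is available, and the reduction identity of the first paragraph is precisely the substitute for that missing spectral information.
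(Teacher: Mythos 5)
Your proof is correct, and it hinges on the same pivotal identity as the paper: after completing the square and undoing the Lorentz change of variables,
\[
\langle \opH V, V \rangle = \tfrac{1}{\gamma}(\opL w, w) + \| v_2 + \beta\cdot\nabla v_1 \|_{L^2}^2, \qquad w = v_1\circ\Lambda_\beta^{-1},
\]
which is exactly the paper's $\langle\opH V,V\rangle = (\tilde\opL v_1,v_1) + \|\beta\partial_1 v_1 + v_2\|_{L^2}^2$ after the substitution. The direction $\bar m\ge\bar k$ (feeding $w\in\Span(\phi_k)$ into $V_w = (w\circ\Lambda_\beta,\,-\beta\cdot\nabla(w\circ\Lambda_\beta))^\top$ to kill the square term) is identical in substance to the paper's use of the $\Phi_k$. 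The genuine difference is in the reverse inequality $\bar m\le\bar k$: the paper extracts the explicit eigenfunction relation $\upsilon_m^2 = -\frac{\beta}{1+\mu_m^2}\partial_1\upsilon_m^1$ from $\opH\Upsilon_m = -\mu_m^2\Upsilon_m$ and then performs a page-long Gram-type computation to show $(\tilde\opL\cdot,\cdot)$ is negative definite on $\Span(\upsilon_m^1)$, whereas your argument bypasses all of this by two observations: on any $\opH$-negative-definite subspace $F$, the first-component map $V\mapsto v_1$ is injective (else $v_1=0$ forces $\langle\opH V,V\rangle=\|v_2\|^2\ge0$, contradiction), and the identity then shows $(\opL w,w)\le\gamma\langle\opH V,V\rangle<0$ on the image, so $\dim F\le\bar k$ by Sylvester. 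This is a cleaner, computation-free argument that buys the same conclusion, and it does not even require $F$ to consist of eigenfunctions. For the coercivity \eqref{Hcoer}, your three-stage plan (essential spectrum bound via Weyl, Young's inequality to upgrade $L^2$-coercivity to the energy norm, spectral decomposition for general $V$) is the standard completion of the argument the paper leaves terse; it is correct and fills in the details the paper elides with ``this yields \eqref{Hcoer}.''
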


\begin{proof} 
As before we   assume (without loss of generality) that the Lorentz boost is of the form $(\beta, 0, \dots, 0)$ for some $\beta \in (-1,1)$. Note that
\begin{gather*}
\begin{aligned}
\langle \opH V, V \rangle & = ((- \Delta + 1 - f'(q_\beta) ) v_1, v_1) + 2 \beta  (\partial_1 v_1, v_2 ) + \| v_2 \|_{L^2} ^2 \\ 
& = (\tilde \opL v_1, v_1) + \| \beta \partial_1 v_1 + v_2 \|_{L^2}^2,
\end{aligned} \\
\text{where} \quad \tilde \opL  := -(1-\beta^2) \partial_{11} - \Delta' + 1 - f'(q_\beta)
\end{gather*}
Observe that $\tilde \opL$ is self-adjoint and that it is a compact perturbation of the operator $-(1-\beta^2) \partial_{11}^2 - \Delta'+1$ so it has essential spectrum $[1,+\infty)$. From there we infer that $\opH$ has only finitely many negative eigenvalues, whose eigenfunctions span a vector space of dimension $\bar m$; as $(\Phi^0_\ell)_{\ell=1, \dots, \bar \ell}$ span $\ker \opH$, and this yields \eqref{Hcoer}.

Also notice that if we denote $\tilde V(x) := V(\Lambda_\beta(x))$ then $(\tilde \opL \tilde V)(x) = (\opL V)(\Lambda_\beta(x))$. This means that  a basis of the eigenfunctions of $\tilde \opL$ with negative eigenvalues is given by the  $(\phi_k\circ\Lambda_\beta)_{k =1 \dots, \bar k}$; in particular they span a subspace of dimension $\bar k$.

\smallskip

Now, we prove that $\bar m=\bar k$.
On the one hand,  for $k=1, \dots, \bar k$, define
\[\Phi_k(x)= \begin{pmatrix} 
\phi_k \\
- \gamma \beta \partial_1 \phi_k
\end{pmatrix}(\Lambda_\beta(x))
 \quad \text{so that} \quad \opH \Phi_k = \begin{pmatrix} 
-\lambda_k^2 \phi_k \\
0
\end{pmatrix}(\Lambda_\beta(x)). \]
Then the $(\Phi_k)_k$ are linearly independent as a consequence of the linear independence of the  $(\phi_k)_k$. Let $W \in \Span( \Phi_k, k=1, \dots, \bar k)$ non zero, $W = \sum_k \alpha_k \Phi_k$ and by $L^2$ orthogonality of the $\phi_k$,
\[ \langle \opH W,W \rangle = - \sum_k \lambda_k^2 \alpha_k^2 < 0. \]
Hence $\langle \opH\cdot,\cdot \rangle|_{\Span( \Phi_k, k=1, \dots, \bar k)}$ is definite negative on $\Span( \Phi_k, k=1, \dots, \bar k)$ which is of dimension $\bar k$. By Sylvester inertia theorem, we deduce that $\bar k \le \bar m$.

\smallskip

On the other hand, denote by $\Upsilon_m = ( \upsilon_m^1, \upsilon_m^2 )^\top$ a family of $L^2$-orthogonal eigenfunctions of $\opH$ with negative  eigenvalues $-\mu_m^2$ ($\mu_k>0$),
i.e. $\opH \Upsilon_m = - \mu^2_m \Upsilon_m$. Then $( \upsilon_m^1, \upsilon_m^2 )$ satisfy
\[ \left\{ \begin{array}{r@{}l}
(- \Delta + 1 - f'(q_\beta) ) \upsilon_m^1 - \beta \partial_1 \upsilon_m^2 & = - \mu_m^2 \upsilon_m^1 \\[5pt]
\beta \partial_1 \upsilon_m^1 + \upsilon_m^2  & = - \mu_m^2 \upsilon_m^2
\end{array} \right. \]
so that $\ds \upsilon_m^2 = - \frac{\beta}{1+\mu_m^2} \partial_1 \upsilon_m^1$ and
\[ \left( - \Delta + 1 - f'(q_\beta)  + \frac{\beta^2}{1+\mu_m^2} \partial_{11} \right) \upsilon_m^1 = - \mu_m^2 \upsilon_m^1. \]
Then,
\begin{align*}
\tilde \opL \upsilon_m^1
= - \mu_m^2 \upsilon_m^1 + \frac{\beta^2 \mu_m^2}{1+\mu_m^2} \partial_{11} \upsilon_m^1
\end{align*}
and so
\[
(\tilde \opL\upsilon_m^1,\upsilon_m^1)
=-\mu_m^2\left(  \| \upsilon_m^1 \|_{L^2}^2 + \frac{\beta^2}{1+\mu_m^2} \| \partial_1 \upsilon_m^1 \|_{L^2}^2\right) 
\]
For $m\neq m'$, note first  that the orthogonality $\langle \Upsilon_m,\Upsilon_{m'}\rangle=0$ gives
\begin{align*}
0 = \langle \Upsilon_m,\Upsilon_{m'}\rangle
&=\int \upsilon_m^1 \upsilon_{m'}^1+ \int \upsilon_m^2\upsilon_{m'}^2 \\
&= \int \upsilon_m^1 \upsilon_{m'}^1 + \frac{\beta^2}{(1+\mu_m^2)(1+\mu_{m'}^2)} \int \partial_1 \upsilon_m^1 \partial_1\upsilon_{m'}^1.
\end{align*}
Thus, for $m\neq m'$,
\begin{align*}
(\tilde \opL\upsilon_m^1,\upsilon_{m'}^1) &= 
- \mu_m^2 \left(\int \upsilon_m^1 \upsilon_{m'}^1 +\frac{\beta^2 }{1+\mu_m^2} \int \partial_1 \upsilon_m^1 \partial_1 \upsilon_{m'}^1 \right)\\
&= -\frac{\beta^2 \mu_m^2\mu_{m'}^2}{(1+\mu_m^2)(1+\mu_{m'}^2)} \int \partial_1 \upsilon_m^1 \partial_1 \upsilon_{m'}^1
\end{align*}
Let $w = - \beta^2\sum_{m=1}^{\bar m} \alpha_m \upsilon_m^1$ non zero.  Then, we obtain for $w$
\begin{align*}
(\tilde \opL w, w) & =   - \beta^2 \sum_{m,m'=1}^{\bar m} \frac{\alpha_m\alpha_{m'}\mu_m^2\mu_{m'}^2}{(1+\mu_m^2)(1+\mu_{m'}^2)} \int \partial_1 \upsilon_m^1 \partial_1 \upsilon_{m'}^1 \\ 
& \quad -   \sum_{m=1}^{\bar m} \alpha_m^2\mu_m^2 \left( \| \upsilon_m^1 \|_{L^2}^2 +  \frac{\beta^2}{(1+\mu_m^2)^2}\| \partial_1 \upsilon_m^1 \|_{L^2}^2\right)
\end{align*}
For the first term of the right hand side, we have the identity
\[
  \sum_{m,m'=1}^{\bar m} \frac{\alpha_m\alpha_{m'}\mu_m^2\mu_{m'}^2}{(1+\mu_m^2)(1+\mu_{m'}^2)} \int \partial_1 \upsilon_m^1 \partial_1 \upsilon_{m'}^1= \left\| \sum_{m=1}^{\bar m} \frac{\alpha_m \mu_m^2}{1+\mu_m^2} \partial_1 \upsilon_m^1 \right\|_{L^2}^2,
\]
and we obtain
\begin{align*}
(\tilde \opL w, w) &\le  - \sum_{m=1}^{\bar m} \alpha_m^2 \mu_m^2 \left( \| \upsilon_m^1 \|_{L^2}^2 + \frac{\beta^2}{(1+\mu_m^2)^2} \| \partial_1 \upsilon_m^1 \|_{L^2}^2 \right).
\end{align*}
which means that $(\tilde \opL \cdot, \cdot)$ is  definite negative on $\Span(\upsilon_m^1, m=1, \dots, \bar m)$, a subspace  of dimension $\bar m$. By Sylvester inertia theorem, $\bar m \le \bar k$.
In conclusion, we have proved $\bar m = \bar k$.
\end{proof}

Even if $\bar m = \bar k$, we will still use  $k \in \{ 1, \dots,  \bar k \}$ and $m \in \{ 1, \dots, \bar m \}$ to denote with clarity the ranges of indices of the negative eigenvalues and corresponding eigenfunctions of the operators $\opL$ and $\opH$.

\subsection{A coercivity property}

The main result of this section is the following proposition, which states a coercivity property for $\opH$ related to the eigenfunctions $Y_k^\pm$ and $\Phi^0_\ell$ of $\opJ\opH$.

\begin{prop} \label{prop:coer_H}
There exists $c>0$ such that, for all $V \in H^1 \times L^2$,
\[ \langle \opH V,V \rangle \ge c \| V \|^2 - \frac{1}{c} \sum_{\pm, k=1}^{\bar k} \langle \opH V, Y_k^\pm \rangle^2 - \frac{1}{c} \sum_{\ell=1}^{\bar \ell} \langle V, \Phi^0_\ell \rangle^2. \]
\end{prop}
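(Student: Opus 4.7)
My plan is an argument by contradiction combined with a Sylvester-inertia count in finite dimension.

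Suppose the estimate fails: then there is a sequence $(V_n)$ in $H^1 \times L^2$ with $\|V_n\|=1$, $\langle \opH V_n, V_n\rangle < 1/n$, and $\sum_{k,\pm}\langle \opH V_n, Y_k^\pm\rangle^2 + \sum_\ell \langle V_n, \Phi^0_\ell\rangle^2 < 1/n^2$. Extract a subsequence $V_n \rightharpoonup V_\infty$ in $H^1 \times L^2$. By the exponential decay in Lemma~\ref{lem:L_JH}(iii), $\opH Y_k^\pm$ and $\Phi^0_\ell$ belong to $L^2 \times L^2$, so the linear forms defining the constraints are weakly continuous and $V_\infty$ inherits them. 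Splitting $\langle \opH V, V\rangle$ into a coercive quadratic form (coercive by $|\beta|<1$ and Cauchy--Schwarz) minus the compact perturbation $(f'(q_\beta) v_1, v_1)$ (weakly continuous on $H^1$ by Rellich and the exponential decay of $q_\beta$) shows $\langle \opH V_\infty, V_\infty\rangle \le 0$. If $V_\infty = 0$, then $\langle V_n, \Upsilon_m\rangle \to 0$ (since $\Upsilon_m \in L^2$), and Lemma~\ref{lem:H} forces $\langle \opH V_n, V_n\rangle \ge c - o(1)$, incompatible with $\langle \opH V_n, V_n\rangle < 1/n$ for large $n$. The remaining task is to rule out a nonzero $V$ in the constraint set with $\langle \opH V, V\rangle \le 0$.

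Write $V = V_N + V_0 + V_+$ using the $L^2$-orthogonal spectral decomposition of $\opH$ into negative-, null-, and positive-spectral parts. The constraint $\langle V, \Phi^0_\ell\rangle = 0$ forces $V_0 = 0$, and if additionally $V_N = 0$ then Lemma~\ref{lem:H} applied to $V_+$ yields $\langle \opH V, V\rangle \ge c\|V_+\|^2 > 0$, contradicting the hypothesis. So one may assume $V_N \ne 0$. Setting $\omega(V, W) := \langle \opH V, W\rangle$ and $F_\pm := \Span(Y_k^\pm, k=1,\dots,\bar k)$, the two spaces $F_\pm$ are in direct sum (distinct $\opJ\opH$-eigenvalues) and each is $\omega$-isotropic by~\eqref{o.Y}. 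The crucial subsidiary claim is that \emph{$\omega|_{F_++F_-}$ is non-degenerate}, which (together with $F_+$ being isotropic of dimension $\bar k$) forces signature $(\bar k, \bar k)$. Granted this, if $\omega(V, V) < 0$, then $V \notin F_++F_-$ (else non-degeneracy would give $V=0$) and the subspace $S = \Span(V) + F_+ + F_-$, of dimension $2\bar k + 1$, inherits the block structure $(0,1) \oplus (\bar k, \bar k)$ because $V$ is $\omega$-orthogonal to $F_++F_-$; the resulting $\bar k+1$ negative directions contradict the global count of $\bar k$ negative directions of $\opH$ (Lemma~\ref{lem:H}). The boundary case $\omega(V, V) = 0$ is reduced to the previous one: combining $\mathrm{Im}\,\opH \perp \ker \opH$ with the non-degeneracy claim shows that $V$ cannot lie in the $\omega$-null subspace of the constraint set, so there is $W$ in the constraint set with $\omega(V, W) \ne 0$, and a suitable small $t$ yields $V + tW$ in the constraint set with $\omega(V+tW, V+tW) < 0$, reducing to the case already treated.

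The main obstacle is the subsidiary non-degeneracy claim. By the computation in the proof of Lemma~\ref{lem:L_JH}(ii), $\omega(Y_k^+, Y_{k'}^-)$ vanishes unless $\lambda_k = \lambda_{k'}$, so the pairing matrix is block-diagonal over the eigenvalue blocks of $\opL$. Within a single block $\lambda$ of multiplicity $r$, expanding~\eqref{d.Yk} and integrating by parts (in the variable $y_1$ after the change of variables $y = \Lambda_\beta(x)$) identifies the $r\times r$ pairing matrix, up to a common factor $2\lambda/\gamma$, with $-\lambda D + \beta B$, where $D = ((\phi_k, \phi_{k'}))_{k,k'}$ is the symmetric positive-definite Gram matrix of the $\phi_k$ and $B = ((\partial_1 \phi_k, \phi_{k'}))_{k,k'}$ is antisymmetric (antisymmetry from a further integration by parts). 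Since the antisymmetric part does not contribute to the quadratic form, $v^\top (-\lambda D + \beta B) v = -\lambda\, v^\top D v < 0$ for every $v \ne 0$, so the matrix is non-singular.
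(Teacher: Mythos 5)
Your proof is correct, and it follows a genuinely different path from the paper's. The paper reduces to the case of exact orthogonality, decomposes each $Y_k^\pm$ along $\Upsilon_m, \Phi^0_\ell$, shows the coefficient matrices $\mathcal Y^\pm$ are invertible from $G^\pm = \mathcal Y^\pm (\mathcal Y^\pm)^\top$ being positive definite and $\bar m = \bar k$, introduces the $\opH$-orthogonal projections $\Pi^\pm$, and derives the identity $\langle \opH V, V\rangle = \langle \opH \tilde V, \tilde V\rangle - \sqrt{\langle \opH \Pi^+V, \Pi^+V\rangle}\sqrt{\langle \opH \Pi^-V, \Pi^-V\rangle}$ together with a compactness argument on the finite-dimensional $F = \Span(\tilde Y_k^\pm)$ to get $a<1$; this produces an explicit constant $c$. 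Your argument is softer: a contradiction sequence, weak compactness in $H^1\times L^2$ with Rellich to handle the potential term $(f'(q_\beta)v_1,v_1)$, and then a finite-dimensional Sylvester inertia count to rule out a nonzero limit $V$ in the constraint set with $\langle\opH V, V\rangle \le 0$. The ingredient that has no analogue in the paper is your non-degeneracy of $\omega|_{F_++F_-}$ (equivalently, the non-singularity of the $F_+\times F_-$ pairing matrix), proved by the block-diagonal structure across $\opL$-eigenvalues and, within an eigenvalue block, the explicit form $\frac{2\lambda}{\gamma}(-\lambda D + \beta B)$ with $D$ a positive diagonal Gram matrix and $B$ antisymmetric, hence non-singular; I checked this computation (using the correct expression for $Y_k^-$, i.e.\ with second component $\pm\gamma\lambda_k\phi_k - \gamma\beta\cdot\nabla\phi_k$ --- note the formula \eqref{d.Yk} in the paper has a sign typo, which does not affect the rest of the paper since only the abstract properties of $Y_k^\pm$ are used) and it is right. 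Both proofs hinge on $\bar m = \bar k$ from Lemma~\ref{lem:H}; the paper's is constructive and yields a quantitative $c$, while yours is cleaner conceptually (the inertia count makes transparent why the $\bar k$ negative directions of $\opH$ are exactly absorbed by the constraints) but non-quantitative by nature. Your handling of the boundary case $\omega(V,V)=0$ via a perturbation $V+tW$ inside the constraint set is terse but correct once one unpacks it: for any $m$ with $\langle V,\Upsilon_m\rangle\ne 0$ one can correct $\Upsilon_m$ by an element of $F_++F_-$ (using the non-degeneracy) and an element of $\ker\opH$ to obtain $W$ in the constraint set with $\omega(V,W)=-\mu_m^2\langle V,\Upsilon_m\rangle\ne 0$, since $V$ is $\omega$-orthogonal to $F_++F_-$ and $\omega$ annihilates $\ker\opH$.
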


This result is a generalization of Proposition~2 in \cite{CoMu} to the case of bound states (we also refer to Lemma~5.2 in \cite{DM1} for a previous similar result for the energy critical NLS  equation). In constrast with previous works in the case of ground states, this result is obtained with no a priori information on the spectrum of $\opL$.

\begin{nb}
The proof of the Claim page 7471 of~\cite{CoteMar} is not correct (the identities at the bottom of page 7471 are erroneous), which makes the proof of the proposition invalid.
We provide a corrected proof in the present version.
We refer to \cite{XY} for a self-contained proof in the analogue context of the energy critical wave equation.
We also refer to \cite{ChenJendrej} for an alternate proof.

\end{nb}

\begin{proof}
We  assume   that the Lorentz boost is of the form $(\beta, 0, \dots, 0)$, with $\beta \in (-1,1)$.
We define the functions $W_k$ by
\[
W_k =  Y_k^+ + Y_k^-.
\]
The strategy is to establish that there exists $c>0$ such that for any function $V\in H^1\times L^2$ satisfying the orthogonality conditions
\begin{equation}\label{new:ortho}
\langle \opH V, W_k \rangle = \langle V, \Phi^0_\ell \rangle =0, \quad \mbox{for all $k=1, \dots, \bar k$, $\ell = 1, \dots, \bar \ell$,}
\end{equation}
it holds
\begin{equation}\label{eq:coercH_V}
\langle \opH V,V \rangle \ge c \| V \|^2.
\end{equation}
By standard arguments, as $W_k \in \Span(Y_k^\pm)$, this implies the statement of Proposition~\ref{prop:coer_H}.
Since $\opH$ has exactly $\bar k+\bar \ell$ nonpositive eigenvalues (see Lemma~\ref{lem:H}), it is more satisfactory to obtain coercivity under exactly $\bar k+\bar \ell$ orthogonality conditions as in \eqref{new:ortho} rather than with $2\bar k + \bar \ell$ scalar products as in the statement of the proposition.

\begin{claim}
The functions $(W_k)_{1\leq k\leq \bar k}$ satisfy the following properties.
\begin{enumerate}
\item For all $k\in \{1,\ldots,\bar k\}$, it holds
\begin{equation*}
\langle \opH W_k, W_k \rangle = -\frac{4\lambda_k^2}\gamma.
\end{equation*}
\item For all $k,k'\in \{1,\ldots,\bar k\}$ with $k\neq k'$, it holds
\begin{equation*}
\langle \opH W_k, W_{k'} \rangle = 0.
\end{equation*}
\end{enumerate}
\end{claim}
\begin{proof}[Proof of the Claim]
We recall the expression of $Y_k^{\pm}$
\begin{equation}\label{newdef:Y}
Y_k^\pm(x) = e^{\mp \gamma \lambda_k \beta x_1} \begin{pmatrix}
\phi_k \\
- \gamma \beta\, \partial_1 \phi_k \pm \gamma \lambda_k \phi_k \\
\end{pmatrix} (\gamma x_1, x').
\end{equation}
Proof of (1). From (18), we have
$\langle \opH Y_k^+, Y_k^+ \rangle =\langle \opH Y_k^-, Y_k^- \rangle=0$ and thus
\[
\langle \opH W_k, W_k \rangle
=2 \langle \opH Y_k^+, Y_k^- \rangle.
\]
Using $\opH Y_k^+ = - \frac {\lambda_k}{\gamma} \opJ  Y_k^+$ (see (16))
and then the explicit expressions of $Y_k^\pm$ in~\eqref{newdef:Y}, we compute
\begin{equation*}
\langle \opH Y_k^+, Y_k^- \rangle
=- \frac {\lambda_k}{\gamma} \langle \opJ  Y_k^+,Y_k^-\rangle\\
=- 2\frac{\lambda_k^2 }{\gamma}\langle \phi_k,\phi_k\rangle.
\end{equation*}
The result follows from the normalization $\langle \phi_k,\phi_k\rangle=1$.

Proof of (2). Let $k\neq k'$. In the case where $\lambda_k\neq \lambda_k'$, it is shown in the proof of Lemma~\ref{lem:L_JH} that
$\langle \opH Y_k^+, Y_{k'}^+ \rangle =
\langle \opH Y_k^-, Y_{k'}^-\rangle=
\langle \opH Y_k^+, Y_{k'}^- \rangle=0$, which provides the desired conclusion.

In the case where $\lambda_k= \lambda_k'$,the identities
$\langle \opH Y_k^+, Y_{k'}^+ \rangle =
\langle \opH Y_k^-, Y_{k'}^-\rangle=0$ still hold.
From $\opH Y_k^+ = - \frac {\lambda_k}{\gamma} \opJ  Y_k^+$, then the explicit expressions of $Y_k^\pm$ in~\eqref{newdef:Y}, $\langle\phi_k,\phi_{k'}\rangle=0$ and integration by parts,
we compute
\begin{equation*}
\langle \opH Y_k^+, Y_{k'}^- \rangle
=- \frac {\lambda_k}{\gamma} \langle \opJ  Y_k^+,Y_{k'}^-\rangle\\
= -\frac{2\beta \lambda_k }{\gamma} \langle \partial_1 \phi_k, \phi_{k'}\rangle.
\end{equation*}
Therefore by integration by parts,
\begin{equation*}
 \langle \opH W_k, W_{k'} \rangle
=  \langle \opH Y_k^+, Y_{k'}^- \rangle +  \langle \opH Y_{k'}^+, Y_{k}^- \rangle
= 0,
\end{equation*}
which proves the claim.
\end{proof}

We recall from Lemma~\ref{lem:H} that there exists $c_0>0$ such that, for any $V\in H^1\times L^2$
\begin{equation}\label{eq:lemma2}
\langle \opH V , V \rangle
\geq c_0 \|V\|^2 - \frac 1{c_0} \sum_{k=1}^{\bar k} \langle V,\Upsilon_k\rangle^2
- \frac 1{c_0} \sum_{\ell=1}^{\bar \ell} \langle V,\Phi_\ell^0\rangle^2.
\end{equation}
Recall also from the definition of $\opH$ that, for any $V\in H^1\times L^2$
\begin{multline}\label{eq:HH}
\langle\opH V,V\rangle 
=\|\nabla v_1\|_{L^2}^2+\|v_1\|_{L^2}^2+\|v_2\|_{L^2}^2+2\beta \int (\partial_1 v_1) v_2
-\int f'(q_\beta) v_1^2\\
\geq (1-|\beta|) \|V\|^2 -\int f'(q_\beta) v_1^2.
\end{multline}
Now, we prove that \eqref{new:ortho} implies \eqref{eq:coercH_V}.
For the sake of contradiction, assume that there exists a sequence of functions $V_n=(v_{1,n},v_{2,n})\in H^1\times L^2$
satisfying the orthogonality conditions~\eqref{new:ortho} and the inequality
\[
\langle \opH V_n ,V_n \rangle < \frac 1n  \| V_n \|^2.
\]
By \eqref{eq:HH}, for $n$ large, $\int f'(q_\beta) v_{1,n}>0$ and we may normalize $V_n$ as follows
\[
\int f'(q_\beta) v_{1,n}^2 = 1,
\]
so that the sequence $(V_n)$ is bounded in $H^1\times L^2$. 
Up to extraction of a subsequence, it
converges weakly to a function $V\in H^1\times L^2$
satisfying the orthogonality conditions~\eqref{new:ortho}.
By Rellich Theorem, $\int f'(q_\beta) V^2 = 1$, which means that $V\not\equiv 0$.
Moreover, by
\[
\langle\opH V,V\rangle 
=(1-\beta^2)\|\partial_1 v_1\|_{L^2}^2 + \|\nabla'v_1\|_{L^2}^2+\|v_1\|_{L^2}^2+\|\beta\partial_1v_1+v_2\|_{L^2}^2-\int f'(q_\beta) v_1^2
\]
and weak convergence property, it holds $\langle \opH V ,V \rangle \leq \liminf_{n\to\infty} \langle \opH V_n ,V_n \rangle
\leq 0$.

Now, let
\[
F=\Span \left\{V,W_1,\ldots, W_{\bar k},\Phi_1^0,\ldots,\Phi_{\bar\ell}^0 \right\}.
\]
By the condition \eqref{new:ortho} on $V$ and the properties of the families $(W_k)_{k}$
and $(\Phi_\ell^0)_\ell$, we check that the dimension of $F$ is $1+\bar k+\bar \ell$.
We denote by $\opB$ the restriction of the operator $\opH$ to the subspace $F$.
From what precedes, we check that $\opB$ is nonpositive on $F$.
This is contradictory with \eqref{eq:lemma2} which says that $\opB$ is positive under only
$\bar k+\bar \ell$ orthogonality conditions.
\end{proof}

\section{Proof of Proposition~\ref{pr:3}}

\subsection{Notation}
Let $N\in \N\setminus \{0\}$, $\beta_1,\ldots,\beta_N\in \R^d$ be such that
\[ \forall n, \quad|\beta_n| <1 \quad \text{and} \quad  \forall n' \ne n, \quad \beta_{n'} \ne \beta_n;
\quad \gamma_n := \frac 1{\sqrt{1-\beta_n^2}} ,\]
and let  $q_1,q_2,\ldots,q_N$ be any  bound states of equation \eqref{q}.
Denote by $I$ and $I^0$ the following two sets of indices 
\begin{align*}
I & =\{(n,k): n =1, \dots,  N, \ k = 1, \dots, \bar k_n\},\quad  |I|= \mathop{\mathrm{Card}} I =  \sum_{n=1}^N \bar k_n,\\
I^0 & =\{(n,\ell): n = 1, \dots, N, \ \ell = 1, \dots, \bar \ell_n\}, \quad |I^0|= \mathop{\mathrm{Card}}  I^0 =  \sum_{n=1}^N \bar \ell_n.
\end{align*}
Denote by $\m B$ the closed unit ball of $\R^{|I|}$ for the euclidian norm.
For any $n\in \{ 1, \dots, N \}$, we consider the operators $\opL_n$ and $\opH_n$ for the bound state $q_n$, along with the eigenvalues and eigenfunctions defined in Lemma~\ref{lem:L_JH} $(\lambda_{n,k})_{(n,k) \in I}$, $(\phi_{n,k})_{(n,k)\in I}$ $(\phi^0_{n,\ell})_{(n,\ell)\in I^0}$, $(\Phi_{n,\ell}^0)_{(n,\ell)\in I^0}$ and $(Y_{n,k}^{\pm})_{(n,k)\in I,\pm}$
(with obvious notations).
Let
\begin{gather}
 r_n(t,x) =  q_n(\Lambda_{\beta_n}(x-\beta_nt)),\quad
R_n=
 \begin{pmatrix}  
 r_n
 \\  -\beta_n\cdot \nabla r_n\end{pmatrix}, \\
\label{d:h}
\psi^0_{n,\ell}(t,x)=\phi_{n,\ell}^0(\Lambda_{\beta_n} (x-\beta_nt)),
\quad \Psi^0_{n,\ell}= 
\begin{pmatrix}  \psi^0_{n,\ell} \\  -\beta_n\cdot \nabla \psi^0_{n,\ell}\end{pmatrix} = \Phi_{n,\ell}(x- \beta_n t), \\
\label{d.Zn}
\text{and} \quad Z_{n,k}^{\pm} (t,x) = (\opH_n Y_{n,k}^{\pm})  (x - \beta_n t)
\end{gather}
be their travelling-in-time counterparts. We recall the equation for $\psi_{n,\ell}^0$:
\begin{equation}\label{Lbetan}
(\Delta - (\beta_n\cdot \nabla)^2 )\psi^0_{n,\ell} - \psi^0_{n,\ell} + f'(r_n) \psi^0_{n,\ell}=0.
\end{equation}

\smallskip

Let $T_0\gg1$ to be fixed later large enough and $\omega>0$ to be fixed later small enough, independently of $T_0$.
For brevity, we will omit to mention the fact that $\omega$ is taken small so that estimates hold.
It will be convenient in the estimates to introduce the following enveloping functions: for any $n=1,\ldots,N$, we set
\[
\rho_n(t,x)=e^{-\omega |x-\beta_n t|},\quad \text{and} \quad \brho = \sum_{n=1}^N \rho_n.
\]
In particular, $\omega$ will be so small and $T_0$ so large that so that for any $n \ne n'$,
\begin{align} \label{def:rho}
\forall t \ge T_0, \forall x \in \m R^d, \quad  e^{-(p_0-1)\omega_0 |x-\beta_n t|} e^{- (p_0-1)\omega_0 |x-\beta_{n'} t|} \le e^{-10 \omega t} \rho(t,x).
\end{align}
(the exponential decay rate $\omega_0>0$ was defined in \eqref{expo}).

\smallskip

Fix any $S_0\ge T_0$.
To prove Proposition~\ref{pr:3}, we show that there exists a choice of coefficients
$(\theta_{n,k}^\pm)_{(n,k)\in I,\pm}$, $|\theta|\ll e^{-\omega S_0}$,
such that
the backward solution $U(t)$ of \eqref{NLKGs} with data
\begin{equation}\label{d.U0}
U(S_0)=\sum_{n=1}^N R_n(S_0) + \sum_{\pm, (n,k) \in I^0} \theta_{n,k}^{\pm}  Z_{n,k}^{\pm}(S_0)
\end{equation}
exists on $[T_0,S_0]$ and satisfies the properties of Proposition \ref{pr:3}.

\smallskip

We consider such a solution $U$ defined on its maximal backwards interval of existence $[S_{\rm max},S_0]$, and we first set
\begin{align}\label{e:dec}
U  = \begin{pmatrix}u \\
\partial_t u\end{pmatrix} = \sum_{n=1}^N R_n  + V,\quad V =  \begin{pmatrix}v \\\partial_t v\end{pmatrix}.
\end{align}
We further decompose $V$ according to the kernel of the linearized operator around each bound state $r_n$. 

\begin{lem}\label{le:Wbis}
For $T_0>1$ large enough and $t \ge T_0$, there exists $b=(b_{n,\ell})_{(n,\ell)\in I^0}$ such that
\begin{equation}\label{d:W}
W = \begin{pmatrix} w \\ z\end{pmatrix}  := V -\sum_{(n,\ell)\in I^0} b_{n,\ell} \Psi^0_{n,\ell}
\end{equation}
satisfies, for all $(n,\ell)\in I^0$, and $C >0$ independent of $t$,
\begin{equation}\label{d:b}
\langle W,\Psi^0_{n,\ell}\rangle = 0,\quad
|b|\le C \|V\|.
\end{equation}
\end{lem}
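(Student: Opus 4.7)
The plan is to define $b=(b_{n,\ell})_{(n,\ell)\in I^0}$ by enforcing the orthogonality conditions directly. The requirement $\langle W,\Psi^0_{n,\ell}\rangle=0$ on $W$ as in~\eqref{d:W} amounts to the linear system
\[ G(t)\, b = \eta(t), \]
where $G(t)_{(n,\ell),(n',\ell')} := \langle \Psi^0_{n,\ell}(t),\Psi^0_{n',\ell'}(t)\rangle$ is the Gram matrix of the family $(\Psi^0_{n,\ell}(t))_{(n,\ell) \in I^0}$ and $\eta(t)_{(n,\ell)} := \langle V(t),\Psi^0_{n,\ell}(t)\rangle$. Everything reduces to proving that $G(t)$ is invertible for $t \ge T_0$ with $\|G(t)^{-1}\|$ bounded uniformly in $t$; Cauchy--Schwarz will then yield the bound on $b$.

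For the uniform invertibility, my strategy is to show that $G(t)$ is a small perturbation of an invertible block-diagonal matrix. Each diagonal block ($n=n'$) is, after the change of variable $y=x-\beta_n t$, the fixed matrix $(\langle\Phi^0_{n,\ell},\Phi^0_{n,\ell'}\rangle)_{\ell,\ell'}$: it is independent of $t$, and invertible since the $(\phi^0_{n,\ell})_\ell$ form an $L^2$-orthogonal basis of $\ker\opL_n$ by Lemma~\ref{lem:L_JH}, hence the $(\Phi^0_{n,\ell})_\ell$ are linearly independent in $L^2 \times L^2$. For the off-diagonal blocks $(n\ne n')$, the exponential decay~\eqref{expo} of $\Phi^0_{n,\ell}$, combined with the separation $\beta_n\ne\beta_{n'}$, yields
\[ \bigl|\langle\Psi^0_{n,\ell}(t),\Psi^0_{n',\ell'}(t)\rangle\bigr|\le C\int_{\R^d}e^{-\omega_0|x-\beta_n t|}e^{-\omega_0|x-\beta_{n'}t|}\,dx \le C e^{-\kappa t}, \]
for some $\kappa>0$ controlled by $\min_{n\ne n'}|\beta_n-\beta_{n'}|$ (the standard two-centre exponential bound, obtained by splitting $\R^d$ according to whether $|x-\beta_n t| \le |\beta_n - \beta_{n'}|t/2$ or not). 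Taking $T_0$ large thus makes the off-diagonal entries arbitrarily small, and a Neumann series argument provides the invertibility of $G(t)$ with $\|G(t)^{-1}\|\le C$ uniformly on $[T_0,+\infty)$.

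The estimate on $b$ is then immediate. By Cauchy--Schwarz in $L^2\times L^2$ and the translation invariance of the $L^2$-norm,
\[ \bigl|\eta(t)_{(n,\ell)}\bigr|\le \|V(t)\|\,\|\Psi^0_{n,\ell}(t)\|=\|V(t)\|\,\|\Phi^0_{n,\ell}\|, \]
and $\|\Phi^0_{n,\ell}\|$ is a finite absolute constant (here I use that the energy norm dominates $L^2\times L^2$, so that the same inequality holds against $\|V\|$). Hence $|b|=|G(t)^{-1}\eta(t)|\le C\|V\|$, which is exactly~\eqref{d:b}. The only substantive step is the uniform well-conditioning of $G(t)$; it is forced purely by the two-body decoupling inherent to the assumption $\beta_n\ne\beta_{n'}$ together with the exponential decay of bound states, so no spectral information beyond Lemma~\ref{lem:L_JH} is needed.
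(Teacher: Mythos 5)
Your proof is correct and follows essentially the same route as the paper's: reduce the orthogonality conditions to the Gram matrix linear system $G(t)b=\eta(t)$, observe that $G(t)$ is a block-diagonal matrix (each block being an invertible Gram matrix of the linearly independent $\Psi^0_{n,\ell}$ for fixed $n$) up to an exponentially small off-diagonal perturbation coming from the separation of speeds and the decay \eqref{expo}, and then conclude via uniform invertibility of $G(t)$ and Cauchy--Schwarz. The paper writes the off-diagonal smallness as $\mathcal{H}=\mathcal{D}^0+O(e^{-10\omega t})$ (using \eqref{def:rho}) rather than spelling out the two-center exponential bound, but the content and structure are the same.
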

\begin{proof}
The orthogonality condition in \eqref{d:b} is equivalent to a matrix identity
\[
(\langle V, \Psi^0_{n,\ell}\rangle)_{(n,\ell)\in I^0} = \mathcal H b,
\]
where 
$b=(b_{n,\ell})_{(n,\ell) \in I^0}$ (written in one row) and
\[
\mathcal H = (\langle \Psi^0_{n,\ell},\Psi^0_{n',\ell'}\rangle)_{(n,\ell),(n',\ell')\in I^0}
=\mathcal D^0 + O(e^{-10 \omega t}),
\]
\[
\mathcal D^0= {\rm diag} (\mathcal H_1^0,\ldots,\mathcal H_n^0),\quad
\mathcal H_n^0 = (\langle \Psi^0_{n,\ell},\Psi^0_{n,\ell'} \rangle)_{\ell,\ell'\in (1,\ldots,\bar \ell_n)}.
\]
Note that for fixed $n$, the family $(\Psi^0_{n,\ell})_{\ell\in (1,\ldots,\bar \ell_n)}$ being linearly independent (see Lemma~\ref{lem:L_JH}), 
the Gram matrix $\mathcal H_n^0$ is invertible. Thus, $\q D^0$ is invertible: for $T_0$ large enough, so is $\mathcal H$  and
$b = \mathcal H^{-1} (\langle V, \Psi^0_{n,\ell}\rangle)_{(n,\ell)\in I^0}$ (and $\q H^{-1}$ has uniform norm in $t \ge T_0$).
\end{proof}
Note that \eqref{d:W} is equivalent to
\begin{align}\label{d:wz}
  w & = v - \sum_{n=1}^N \sum_{\ell=1}^{\bar\ell_n} b_{n,\ell} \psi^0_{n,\ell} \\
z &= v_t +  \sum_{n=1}^N \sum_{\ell=1}^{\bar\ell_n} b_{n,\ell} \beta_n \cdot\nabla \psi^0_{n,\ell}
=w_t+  \sum_{n=1}^N \sum_{\ell=1}^{\bar \ell_n} \dot b_{n,\ell}   \psi^0_{n,\ell}
\end{align}
For the sake of brevity, we denote
\[
\br = \sum_{n=1}^N r_n,\quad
\bpsi = \sum_{n=1}^N \sum_{\ell=1}^{\bar\ell_n} b_{n,\ell} \psi^0_{n,\ell}
\quad \text{so that} \quad u=\br + v= \br +\bpsi+w.\] 
Finally, we set
 \begin{equation}\label{d.a}
a^{\pm}_{n,k}=\langle V, Z_{n,k}^\pm\rangle.
\end{equation}

Observe that $\langle \Psi_{n, \ell}^0, Z_{n',k}^\pm \rangle = O(e^{-10 \omega t)})$  for all $n,n'=1, \dots, N$, $\ell =1, \dots, \bar \ell_n$, $k=1, \dots, \bar k_{n'}$ and signum $\pm$ (it is obvious by separation and decay \eqref{expo} when $n \ne n'$, and when $n=n'$ it is equal to $\langle \Phi_{n, \ell}^0, \opH_n \opJ Y_{n',k}^\pm \rangle =\langle \opJ \opH_n \Phi_{n, \ell}^0, Y_{n',k}^\pm \rangle = 0$. Therefore 
\begin{align} \label{eq:W_a}
\langle W, Z_{n,k}^\pm \rangle = a^{\pm}_{n,k} + O(|b| e^{-10 \omega t}).
\end{align}

Let
\[
p_0 = \min(2,p),\quad 1<p_0\le 2.
\]

\subsection{Bootstrap setting}
We consider the following bootstrap estimates
\begin{equation}\label{eq:BS}
\begin{cases}
\|W(t)\|\le e^{- \omega t} ,\quad &
|b(t)|\le e^{- \omega t},\\
 |a^-(t)|\le e^{- \frac 13(p_0+2) \omega t},\quad & |a^+(t)|\le e^{- \frac 13(p_0+2)\omega t}.
\end{cases}
\end{equation}

We claim that  any given initial value of $a^{+}(S_0)$ can be matched by  a suitable choice of initial $\theta$ in the definition of $U(S_0)$ in \eqref{d.U0}.

\begin{lem}\label{le:aa}
There exists a $\mathcal C^1$ map $\Theta:\m B \to (e^{-\frac14(p_0+3) \omega S_0} \m B)^2 $
such that for any $\mathfrak a^+ = (\mathfrak a_{n,k}^{+})_{(n,k) \in I} \in \m B$, if we take
$\theta=(\theta_{n,k}^{\pm})_{\pm,(n,k) \in I}=\Theta(\mathfrak a^+)$ in the definition of $V(S_0)$ from \eqref{d.U0}-\eqref{e:dec}, there holds, for $a^\pm(S_0)=(a^{\pm}_{n,k}(S_0))_{(n,k) \in I}$ (defined in \eqref{d.a}),
\begin{equation}\label{initial}
a^+(S_0) =  e^{- \frac13(p_0+2) \omega S_0} \mathfrak a^+ \quad \text{and} \quad a^-(S_0)=0.
\end{equation}
Moreover,
\begin{equation}\label{initialW}
\|W(S_0)\|\le  e^{-\frac14(p_0+3)  \omega S_0}, \quad |b(S_0)|\le C e^{-10 \omega S_0}.
\end{equation}
\end{lem}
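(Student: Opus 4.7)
The plan is to exploit that the construction \eqref{d.U0}--\eqref{e:dec} gives $V(S_0) = \sum_{\pm,(n,k)\in I} \theta_{n,k}^{\pm} Z_{n,k}^{\pm}(S_0)$, so that the map $\theta \mapsto (a^+(S_0), a^-(S_0))$ is \emph{linear}, given by multiplication by the matrix
\[ M_{(n,k,\pm),(n',k',\epsilon)} := \langle Z_{n',k'}^{\epsilon}(S_0), Z_{n,k}^{\pm}(S_0) \rangle. \]
The whole lemma then reduces to inverting $M$ on the specific right-hand side $(e^{-\frac{1}{3}(p_0+2)\omega S_0} \mathfrak{a}^+, 0)$ and checking the size estimates on the resulting $\theta$, $W(S_0)$ and $b(S_0)$.

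First I would analyze the block structure of $M$. When $n=n'$, translation invariance of $\langle\cdot,\cdot\rangle$ turns the entries into the time-independent Gram quantities $\langle \opH_n Y_{n,k'}^{\epsilon}, \opH_n Y_{n,k}^{\pm} \rangle$. When $n \ne n'$, the two factors are exponentially localized around the well-separated points $\beta_n S_0$ and $\beta_{n'} S_0$, so by the decay estimate \eqref{expo} and the calibration \eqref{def:rho} one gets $|M_{(n,k,\pm),(n',k',\epsilon)}| \le C e^{-10\omega S_0}$. Thus $M$ is block-diagonal up to an $O(e^{-10\omega S_0})$ perturbation, each diagonal block being the Gram matrix for $\langle\cdot,\cdot\rangle$ of the finite family $(\opH_n Y_{n,k}^{\pm})_{\pm,\, k=1,\ldots,\bar k_n}$.

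The main technical point is the invertibility of these diagonal blocks, but it is essentially for free from Lemma~\ref{lem:L_JH}(ii), which gives the linear independence of $(\opH_n Y_{n,k}^{\pm})_{\pm, k}$, making its Gram matrix positive definite. Hence, for $T_0$ large enough, $M$ is invertible with inverse bounded uniformly in $S_0 \ge T_0$. I would then set
\[ \Theta(\mathfrak{a}^+) := M^{-1}\bigl( e^{-\frac{1}{3}(p_0+2)\omega S_0} \mathfrak{a}^+,\, 0 \bigr), \]
which is linear (hence $\mathcal C^1$) in $\mathfrak{a}^+$ and satisfies \eqref{initial} by construction. The inverse bound gives $|\Theta(\mathfrak a^+)| \le C e^{-\frac{1}{3}(p_0+2)\omega S_0}$, and the elementary inequality $\frac{1}{3}(p_0+2) > \frac{1}{4}(p_0+3)$ (which holds since $p_0 > 1$) then delivers the announced range for $\Theta$ provided $T_0$ is large enough to absorb the constant $C$.

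It remains to verify \eqref{initialW}. From $V(S_0) = \sum \theta Z(S_0)$ and the bound on $\theta$, $\|V(S_0)\| \le C|\theta| \le C e^{-\frac{1}{3}(p_0+2)\omega S_0}$, and Lemma~\ref{le:Wbis} yields $\|W(S_0)\| \le C\|V(S_0)\|$, which is $\le e^{-\frac{1}{4}(p_0+3)\omega S_0}$ for $S_0$ large. For the sharper bound on $b(S_0)$, the key observation is the exact orthogonality
\[ \langle Z_{n,k}^{\pm}(t), \Psi^0_{n,\ell}(t)\rangle = \langle \opH_n Y_{n,k}^{\pm}, \Phi^0_{n,\ell}\rangle = \langle Y_{n,k}^{\pm}, \opH_n \Phi^0_{n,\ell}\rangle = 0 \]
since $\Phi^0_{n,\ell} \in \ker \opH_n$. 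Hence only the cross terms $n' \ne n$ contribute to $\langle V(S_0), \Psi^0_{n,\ell}(S_0) \rangle$, and they are $O(|\theta| e^{-10\omega S_0})$ by exponential separation. Substituting into the formula $b = \mathcal H^{-1} \bigl(\langle V, \Psi^0_{n,\ell}\rangle\bigr)_{(n,\ell) \in I^0}$ from Lemma~\ref{le:Wbis} then gives $|b(S_0)| \le C e^{-10\omega S_0}$, completing the plan.
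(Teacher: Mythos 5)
Your proposal is correct and follows essentially the same route as the paper's proof: both reduce the lemma to inverting the $2|I|\times 2|I|$ matrix $\mathcal Z$ of cross products $\langle Z_{n,k}^{\pm}(S_0), Z_{n',k'}^{\pm'}(S_0)\rangle$, identify it as a block-diagonal matrix of Gram matrices of the linearly independent families $(\opH_n Y_{n,k}^{\pm})_{\pm,k}$ up to an $O(e^{-10\omega S_0})$ perturbation, and then use the exact orthogonality $\langle \opH_n Y_{n,k}^{\pm}, \Phi^0_{n,\ell}\rangle = \langle Y_{n,k}^{\pm}, \opH_n \Phi^0_{n,\ell}\rangle = 0$ to get the sharper bound on $b(S_0)$. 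Your remark making explicit the elementary inequality $\tfrac13(p_0+2) > \tfrac14(p_0+3)$ for $p_0>1$ is a harmless clarification that the paper leaves implicit.
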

\begin{proof}
The proof of this result is similar to that of Lemma~6 in \cite{CoMu} and Lemma~3 in \cite{CMM}.
In view of \eqref{d.U0} and \eqref{e:dec}, it holds
\[
V(S_0) = \sum_{(n',k')\in I,\pm} \theta_{n',k'}^{\pm}  Z_{n',k'}^{\pm}(S_0)
\]
and so we are looking for a solution $(\theta_{n,k}^\pm)_{\pm, (n,k) \in I}$ of the equalities:
\[
e^{- \frac13(p_0+2) \omega S_0} \mathfrak a^+_{n,k}=a^{+}_{n,k}(S_0)
=\sum_{(n',k')\in I,\pm} \theta_{n',k'}^{\pm}  \langle Z_{n',k'}^{\pm}(S_0),Z_{n,k}^+(S_0) \rangle, \]
\[
0 = a^{-}_{n,k}(S_0)
=\sum_{(n',k')\in I,\pm} \theta_{n',k'}^{\pm}  \langle Z_{n',k'}^{\pm}(S_0),Z_{n,k}^-(S_0) \rangle, \]
which rewrites as
$
a = \mathcal Z \theta,
$ where 
\begin{align*}
a & =(a^+_{1,1},a^-_{1,1},a_{1,2}^+,a_{1,2}^-,\ldots)^\top = (a^\pm)_{\pm, (n,k) \in I}, \\
\theta & =(\theta^+_{1,1},\theta^-_{1,1},\theta_{1,2}^+,\theta_{1,2}^-,\ldots)^\top = (\theta^\pm)_{\pm, (n,k) \in I},
\end{align*}
and where $\mathcal Z$ is the  $2|I| \times 2|I|$  matrix
\[ \mathcal Z=(\langle Z_{n,k}^{\pm}(S_0), Z_{n',k'}^{\pm'}(S_0) \rangle)_{\pm, \pm', (n,k), (n',k')\in I}. \]
In particular, by \eqref{expo}, for $\omega>0$ small enough, we note that
\begin{gather*}
\mathcal Z = \mathcal D  + O(e^{-10 \omega S_0}),\quad \text{where} \quad
\mathcal D ={\rm diag} \begin{pmatrix} \mathcal Z_1,\ldots, \mathcal Z_N \end{pmatrix}, \quad \text{with} \\
\mathcal Z_n  = \begin{pmatrix} 
 (Z_{n,1}^{+}(S_0),Z_{n,1}^+(S_0)) & (Z_{n,1}^{-}(S_0),Z_{n,1}^+(S_0)) &\ldots & (Z_{n,1}^{-}(S_0),Z_{n,\bar k}^+(S_0))\\
  (Z_{n,1}^{+}(S_0),Z_{n,1}^-(S_0)) & (Z_{n,1}^{-}(S_0),Z_{n,1}^-(S_0)) &\ldots & (Z_{n,1}^{-}(S_0),Z_{n,\bar k}^-(S_0))\\
 \vdots & & \ddots \\
 (Z_{n,\bar k}^{+}(S_0),Z_{n,1}^-(S_0)) & (Z_{n,1}^{-}(S_0),Z_{n,1}^-(S_0)) &\ldots & (Z_{n,\bar k_n}^{-}(S_0),Z_{n,\bar k}^-(S_0))
\end{pmatrix}
\end{gather*}
For any $n=1,\ldots,N$, $\mathcal Z_n$ is the Gram matrix of the linearly independent family of size $2\bar k_n$ $(Z_{n,k}^\pm(S_0))_{\pm, k=1,\ldots \bar k_n}$.
Thus, $\mathcal Z_n$ is invertible and $\mathcal D$ is invertible. It follows that $\mathcal Z$ is invertible for $T_0$ large enough.

Moreover, from \eqref{initial}, for $T_0$ large enough,
\begin{align} \label{bd:theta}
|\theta| \le C |a(S_0)| \le C e^{-\frac 13(p_0+2) \omega S_0}.
\end{align}
and so from the definition of $V(S_0)$ above, and the fact that $\langle Z_{n,k}^\pm, \Psi_{n,\ell}^0 \rangle =0$ for any $n$, $k=1, \dots \bar k_n$, $\ell = 1, \dots, \bar \ell_n$ and signum $\pm$, we infer that $|\langle V(S_0), \Psi_{n,\ell}^0 \rangle| \le C e^{-10 \omega S_0}$.

Recalling the definition of $b$ (at the end of the proof of Lemma \ref{le:Wbis}), we deduce that 
\[ |b(S_0)| = |\q H^{-1} (\langle V, \Psi_{n,k}^0)_{(n,\ell) \in I^0})| \le C | (\langle V, \Psi_{n,k}^0)_{(n,\ell) \in I^0} | \le C e^{-10 \omega S_0}. \]
From \eqref{bd:theta} and \eqref{d:b}, we get
\[ \| V(S_0)\|, \| W(S_0) \| \le C e^{-\frac 13(p_0+2) \omega S_0}. \]
To conclude, simply observe that for large $T_0$, $C e^{-\frac 13(p_0+2) \omega S_0} \le e^{-\frac 14(p_0+3) \omega S_0}$.
\end{proof}

We define the following backward exit time $S_\star = S_\star(\mathfrak a^+)$ related to the bootstrap estimates \eqref{eq:BS}.
\[
S_\star = \inf\{ T\in [T_0,S_0] \text{ such that $U$ is defined  and satisfies \eqref{eq:BS} on $[T,S_0]$}\}.
\]
Note that in view of Lemma \ref{le:aa}, $U(S_0)$ satisfies \eqref{eq:BS} so that that $T_0\le S_\star\le S_0$ is well-defined.
Our goal is to find a specific choice of $\mathfrak a^+ \in \m B$ so that $S_\star=T_0$.

The argument goes by contradiction of this condition. 

In the next subsections, we fix a choice of $\mathfrak a^+ \in \m B$, such that
\begin{equation}\label{contra}
T_0< S_\star(\mathfrak a^+) \le S_0.
\end{equation}
We now derive estimates on $\|W\|$, $|b|$ and $|a^\pm|$ on $[S^\star,S_0]$, so as to prove -- in Lemma \ref{le:tra} -- that the flow issued from $\mathfrak a^+$ is transverse at the exit time $S_\star = S_\star(\mathfrak a^+)$.

\subsection{Equation of $W$ and preliminary estimates}
\begin{lem}\label{le:W}
The function $W$ satisfies  
\begin{equation}
\partial_t W
  =  \begin{pmatrix} z   \\  \Delta w -w+ f\left(\br + \bpsi+w\right)
-f\left(\br + \bpsi\right) \end{pmatrix}    + 
\sum_{(n,\ell) \in I^0} \dot b_{n,\ell} \Psi^0_{n,\ell} +  G\label{NLKGw} \end{equation}
where
\begin{equation}\label{d:g}
G = \begin{pmatrix}0 \\ \bg \end{pmatrix}, \quad 
\bg :=f\left(\br + \bpsi\right) - \sum_{n=1}^N f(r_n)
- \sum_{(n,\ell) \in I^0} b_{n,\ell} f'(r_n) \psi^0_{n,\ell}.
\end{equation}
\end{lem}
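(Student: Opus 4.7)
The proof is a direct computation — no essential analytic obstacle, just careful bookkeeping of three coupled equations. The plan is to differentiate the identity $W=V-\sum b_{n,\ell}\Psi^0_{n,\ell}$ and to use, separately, the PDEs satisfied by the three building blocks: the full solution $U$, each soliton $R_n$, and each kernel direction $\psi^0_{n,\ell}$.

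First, since $U$ solves \eqref{NLKGs}, $\partial_t U = (u_t,\,\Delta u-u+f(u))^\top$. Each $r_n(t,x)=q_n(\Lambda_{\beta_n}(x-\beta_n t))$ is a travelling wave, so $\partial_t r_n=-\beta_n\cdot\nabla r_n$ and $\partial_{tt} r_n=(\beta_n\cdot\nabla)^2 r_n=\Delta r_n-r_n+f(r_n)$, hence $\partial_t R_n$ is also of the first-order-system form. Subtracting, $V=U-\sum_n R_n$ satisfies
\[
\partial_t V=\begin{pmatrix} v_t\\ \Delta v-v+f(u)-\sum_n f(r_n)\end{pmatrix}.
\]
Similarly, $\partial_t\psi^0_{n,\ell}=-\beta_n\cdot\nabla\psi^0_{n,\ell}$, and \eqref{Lbetan} gives $\partial_{tt}\psi^0_{n,\ell}=(\beta_n\cdot\nabla)^2\psi^0_{n,\ell}=\Delta\psi^0_{n,\ell}-\psi^0_{n,\ell}+f'(r_n)\psi^0_{n,\ell}$, so
\[
\partial_t\Psi^0_{n,\ell}=\begin{pmatrix}-\beta_n\cdot\nabla\psi^0_{n,\ell}\\ \Delta\psi^0_{n,\ell}-\psi^0_{n,\ell}+f'(r_n)\psi^0_{n,\ell}\end{pmatrix}.
\]

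Next, I would compute $\partial_t W=\partial_t V-\sum\dot b_{n,\ell}\Psi^0_{n,\ell}-\sum b_{n,\ell}\partial_t\Psi^0_{n,\ell}$, substituting the three identities above and the decomposition $u=\br+\bpsi+w$, $v=\bpsi+w$. The first component reduces, using the identity $z=w_t+\sum\dot b_{n,\ell}\psi^0_{n,\ell}$ from \eqref{d:wz}, to $z$ (plus the $\dot b$ correction which gets absorbed into the explicit $\sum\dot b_{n,\ell}\Psi^0_{n,\ell}$ term in the statement). In the second component, the terms $-\sum b_{n,\ell}\Delta\psi^0_{n,\ell}$ and $\sum b_{n,\ell}\psi^0_{n,\ell}$ generated by $-\sum b_{n,\ell}\partial_t\Psi^0_{n,\ell}$ combine with $\Delta v-v$ to yield exactly $\Delta w-w$.

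The one algebraic manipulation worth isolating is the treatment of the nonlinearity in the second component. I would write
\[
f(u)-\sum_n f(r_n)=\bigl[f(\br+\bpsi+w)-f(\br+\bpsi)\bigr]+\bigl[f(\br+\bpsi)-\sum_n f(r_n)-\sum_{(n,\ell)\in I^0}b_{n,\ell}f'(r_n)\psi^0_{n,\ell}\bigr]+\sum_{(n,\ell)}b_{n,\ell}f'(r_n)\psi^0_{n,\ell},
\]
which isolates (i) the genuinely nonlinear-in-$w$ piece that drives the linearized dynamics, (ii) the residual $g$ defined in \eqref{d:g}, and (iii) a leftover linear-in-$b$ piece that cancels against the $+\sum b_{n,\ell}f'(r_n)\psi^0_{n,\ell}$ produced by $-\sum b_{n,\ell}\partial_t\Psi^0_{n,\ell}$ via \eqref{Lbetan}. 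That cancellation is precisely why the particular quadratic form of $g$ appears — and it is the whole point of having subtracted the $b_{n,\ell}f'(r_n)\psi^0_{n,\ell}$ terms inside $g$.

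The only real pitfall is sign bookkeeping, since several minus signs from $-\beta_n\cdot\nabla$, from the definition $W=V-\sum b_{n,\ell}\Psi^0_{n,\ell}$, and from the $\partial_t$ of travelling profiles all interact. Everything else is algebraic; no estimates or regularity arguments are needed for this statement.
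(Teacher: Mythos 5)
Your approach is the same as the paper's: differentiate the identity $W = V - \sum b_{n,\ell}\Psi^0_{n,\ell}$, use the equation for $V$ obtained by subtracting the $R_n$'s from \eqref{NLKGs}, the travelling-wave relation $\partial_t \Psi^0_{n,\ell} = -\beta_n\cdot\nabla\Psi^0_{n,\ell}$ together with \eqref{Lbetan}, and split the nonlinearity to isolate the $w$-dependent piece, the residual $\bg$, and a linear-in-$b$ piece that cancels. The algebraic structure is right.

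That said, the \emph{one pitfall you flagged} --- sign bookkeeping --- is exactly where the write-up is careless. First, the $f'(r_n)\psi^0_{n,\ell}$ contribution coming from the second component of $-\sum b_{n,\ell}\partial_t\Psi^0_{n,\ell}$ is $-\sum b_{n,\ell}f'(r_n)\psi^0_{n,\ell}$, not $+$; it is the nonlinearity decomposition that produces the $+\sum b_{n,\ell}f'(r_n)\psi^0_{n,\ell}$, and these two opposite-sign pieces cancel. Second, and more substantively, carrying your own computation through gives $\partial_t w = z - \sum\dot b_{n,\ell}\psi^0_{n,\ell}$ and $\partial_t z = \Delta w - w + f(\br+\bpsi+w)-f(\br+\bpsi) + \sum\dot b_{n,\ell}\beta_n\cdot\nabla\psi^0_{n,\ell}+\bg$, i.e.\ the $\dot b$ correction is $-\sum\dot b_{n,\ell}\Psi^0_{n,\ell}$, with a minus sign, not the $+\sum\dot b_{n,\ell}\Psi^0_{n,\ell}$ printed in \eqref{NLKGw}. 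This is in fact a sign typo in the paper: the downstream use in the proof of Lemma~\ref{le:E}, which writes $\partial_t z$ with $+\sum\dot b_{n,\ell}\beta_n\cdot\nabla\psi^0_{n,\ell}+\bg$ and replaces $z$ by $\partial_t w+\sum\dot b_{n,\ell}\psi^0_{n,\ell}$, is consistent with the minus sign (and the estimate in Lemma~\ref{le:dotb} is sign-insensitive). Your phrase ``gets absorbed into the explicit $\sum\dot b_{n,\ell}\Psi^0_{n,\ell}$ term in the statement'' glosses over exactly this mismatch; pushing the signs to the end would have exposed it.
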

\begin{proof}
First, since $U$ and $R_n$ solve \eqref{NLKGs}, it is direct to check the following equation for $v$
\begin{align}\label{e:V}
\partial_t^2  v = \Delta v-v + f\left(\br + v \right)- \sum_{n=1}^N f(r_n)  .
\end{align}
Next, the first line of \eqref{NLKGw} follows from  the definition of $z$.
For the second line, we observe from the equation of $V$,
\begin{align*}
\partial_t z 
& = \partial_t^2 v + \sum_{(n,\ell) \in I^0} \dot b_{n,\ell} \beta_n \cdot \nabla \psi^0_{n,\ell} - \sum_{(n,\ell) \in I^0} b_{n,\ell} (\beta_n\cdot \nabla)^2 \psi^0_{n,\ell} \\
& = \Delta v - v + f(\br + v) - \sum_{n=1}^N f({r_n}) + \sum_{(n,\ell) \in I^0} \dot b_{n,\ell} \beta_n \cdot \nabla \psi^0_{n,\ell} \\
 & \qquad - \sum_{(n,\ell) \in I^0} b_{n,\ell} (\beta_n\cdot \nabla)^2 \psi^0_{n,\ell}.
\end{align*}
Inserting $v = w + \sum_{(n,\ell) \in I^0} b_{n,\ell} \psi^0_{n,\ell}=w+ \bpsi$, 
and using \eqref{Lbetan}, we find the second line of \eqref{NLKGw}.
\end{proof}

Now, we derive some preliminary estimates related to the equation of $W$. Recall that $p_0 = \min(2,p)$, $1<p_0\le 2$.

First, note that for $K >0$ and any real numbers $(s_j)_{j=1,\ldots,\bar j}$ such that $|s_j|\le K$, the following holds
\begin{align}
&\Big|f\Big(\sum_{j=1}^{\bar j} s_j \Big)-\sum_{j=1}^{\bar j} f(s_j)\Big| \le C(K) \sum_{j\neq j'} |s_j|^{p_0-1}|s_{j'}|^{p_0-1},\label{e:un}\\
&|f(s_1+s_2)-f(s_1) - s_2 f'(s_1)|\le C(K) |s_2|^{p_0},\label{e:undeux}\\
&|f'(s_1+s_2)- f'(s_1)|\le C(K) |s_2|^{p_0-1}.\label{e:untrois}
\end{align}
Second, applying these estimates to various situations, using \eqref{expo} and \eqref{def:rho}, we obtain
\begin{equation}\label{e:deux}
\Big|f( \br )- \sum_n f(r_n)\Big|
\le C \sum_{n\neq n'}|r_n|^{p_0-1} |r_{n'}|^{p_0-1} \le C e^{-10 \omega t} \brho,
\end{equation}
\begin{equation}\label{e:trois}
\Big|f(\br + \bpsi)-f(\br )- f'(\br ) \bpsi \Big|
\le  C |\bpsi|^{p_0} \le C |b|^{p_0} \brho,
\end{equation}
\begin{equation}\label{e:quatre}
\Big| f'(\br) \bpsi - \sum_{(n,\ell) \in I^0} b_{n,\ell} f'(r_n) \psi^0_{n,\ell}\Big|\le 
 \sum_{(n,\ell) \in I^0} \Big| \psi^0_{n,\ell} b_{n,\ell} (f'(\br )-f'(r_n)) \Big| 
 \le C |b|e^{-10 \omega t} \brho.
\end{equation}
(the implicit constant does essentially depend on $\max( \| q_n \|_{L^\infty}, n=1, \dots, N)$). In particular, we obtain
\begin{equation}\label{sur.g}
|\bg (t) |\le C \left( e^{-10 \omega t} + |b(t)|^{p_0} \right) \brho (t) .
\end{equation}
Moreover, we also have
\begin{equation}\label{sur.w1}
|f(\br +\bpsi+w) - f(\br + \bpsi)|\le C \left( \brho^{p_0-1}|w|^{p_0-1}  +|w|^p\right),
\end{equation}
\begin{equation}\label{sur.w2}
|f(\br +\bpsi+w) - f(\br +\bpsi)-w f'(\br +\bpsi)|
\le C \left(|w|^{p_0} + |w|^p\right).
\end{equation}
Since $p>1$, a similar estimate for $F$ holds:
\begin{equation}\label{sur.F}
\left| F\left(\br +\bpsi +w\right)-F\left(\br + \bpsi \right)
-w f\left(\br +\bpsi\right) \right| \le C \left(|w|^2 + |w|^{p+1}\right).
\end{equation}
\subsection{Degenerate directions. Estimates for  $(b_{n,\ell})$} 
\begin{lem}\label{le:dotb}
For all $t\in [S_\star,S_0]$,
\begin{equation}\label{dotb}
 |\dot b(t) |\le C \left( \|W (t) \| + e^{-10\omega t} +  |b(t)|^{p_0}\right)
 \le C e^{-\omega t}.
\end{equation}
\end{lem}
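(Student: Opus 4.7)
The plan is to extract a differential identity for $\dot b$ by differentiating in time the orthogonality conditions
\[
\langle W(t), \Psi^0_{n,\ell}(t) \rangle = 0, \quad (n,\ell) \in I^0,
\]
then substitute the evolution equation \eqref{NLKGw} for $\partial_t W$, and solve the resulting linear system for $\dot b$ by inverting the Gram matrix $\mathcal H$ (as was done at the end of the proof of Lemma~\ref{le:Wbis}).

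\medskip

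First, since $\Psi^0_{n,\ell}(t,x) = \Phi^0_{n,\ell}(x - \beta_n t)$, the transport identity $\partial_t \Psi^0_{n,\ell} = -\beta_n \cdot \nabla \Psi^0_{n,\ell}$ holds componentwise. Differentiating the orthogonality, and using \eqref{NLKGw} together with the expression \eqref{d:g} for $G$, yields
\[
(\mathcal H \dot b)_{n,\ell} = -\langle \mathrm{Lin}(W), \Psi^0_{n,\ell}\rangle - \langle W, \partial_t \Psi^0_{n,\ell}\rangle - \langle G, \Psi^0_{n,\ell}\rangle - \langle \mathrm{NL}(w), \Psi^0_{n,\ell}\rangle,
\]
where $\mathrm{Lin}(W) = (z, \Delta w - w)^\top$ and $\mathrm{NL}(w) = (0, f(\br + \bpsi + w) - f(\br + \bpsi))^\top$.

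\medskip

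For the purely linear contribution, I will pair $\mathrm{Lin}(W)$ and $\partial_t \Psi^0_{n,\ell}$ with the components of $W$ and integrate by parts on the Laplacian. After using the stationary equation \eqref{Lbetan} for $\psi^0_{n,\ell}$ in the form $\psi^0_{n,\ell} + (\beta_n \cdot \nabla)^2 \psi^0_{n,\ell} = \Delta \psi^0_{n,\ell} + f'(r_n)\psi^0_{n,\ell}$, the linear contribution reduces to a sum of inner products of $w$ and $z$ against explicit translates of functions with exponential decay; by Cauchy--Schwarz it is bounded by $C\|W\|$ with $C$ independent of $t$.

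\medskip

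For $\langle G, \Psi^0_{n,\ell}\rangle$, estimate \eqref{sur.g} and the exponential decay of $\nabla \psi^0_{n,\ell}$ give the bound $C(e^{-10\omega t} + |b|^{p_0})$. For the nonlinear remainder $\langle \mathrm{NL}(w), \Psi^0_{n,\ell}\rangle$, estimate \eqref{sur.w1} together with Hölder's inequality and the exponential decay of $\nabla \psi^0_{n,\ell}$ yield a bound by $C(\|W\|^{p_0-1} + \|W\|^{p}) \le C\|W\|$ using the bootstrap \eqref{eq:BS} (which makes $\|W\|$ small for $T_0$ large).

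\medskip

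Since $\mathcal H = \mathcal D^0 + O(e^{-10\omega t})$ is uniformly invertible for $t \ge T_0$ with $T_0$ large, we conclude
\[
|\dot b| \le C\bigl(\|W\| + e^{-10\omega t} + |b|^{p_0}\bigr).
\]
Finally, to obtain the claimed $Ce^{-\omega t}$ bound, plug in the bootstrap hypotheses \eqref{eq:BS}: $\|W\| \le e^{-\omega t}$ gives the first term, and $|b|^{p_0} \le e^{-\frac{1}{2}(p_0 + 1)\omega t} \le e^{-\omega t}$ since $p_0 > 1$. The main obstacle I expect is bookkeeping in the linear-part cancellation (the integrations by parts and the use of \eqref{Lbetan}); everything else is a routine application of the estimates already established.
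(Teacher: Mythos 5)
Your overall strategy matches the paper's: differentiate the orthogonality $\langle W,\Psi^0_{n,\ell}\rangle=0$, substitute \eqref{NLKGw}, and invert the Gram matrix $\mathcal H$. However, there is a genuine gap in your treatment of the nonlinear remainder. You invoke \eqref{sur.w1}, which gives
\[
\bigl|f(\br+\bpsi+w)-f(\br+\bpsi)\bigr|\le C\bigl(\brho^{p_0-1}|w|^{p_0-1}+|w|^p\bigr),
\]
and after pairing with the exponentially decaying $\nabla\psi^0_{n,\ell}$ you obtain $C\bigl(\|W\|^{p_0-1}+\|W\|^p\bigr)$. You then assert that this is $\le C\|W\|$ because $\|W\|$ is small; but this is backwards for the first term: when $1<p<2$ we have $p_0-1\in(0,1)$, so $\|W\|^{p_0-1}\ge\|W\|$ whenever $\|W\|\le 1$. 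Consequently, under the bootstrap $\|W\|\le e^{-\omega t}$ you only get $\|W\|^{p_0-1}\le e^{-(p_0-1)\omega t}$, which is strictly larger than $e^{-\omega t}$ for $p_0<2$, and the final bound $|\dot b|\le C e^{-\omega t}$ does not follow. The lemma would then fail to close the bootstrap for $b$.

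The fix is to use a first-order estimate on the nonlinear difference rather than \eqref{sur.w1}. Since $\br+\bpsi$ is bounded in $L^\infty$ (with exponential decay), one has for $f(u)=|u|^{p-1}u$ the pointwise bound $|f(a+b)-f(a)|\le C(|a|^{p-1}|b|+|b|^p)$, so
\[
\bigl|f(\br+\bpsi+w)-f(\br+\bpsi)\bigr|\le C\bigl(\brho^{p-1}|w|+|w|^p\bigr).
\]
Pairing with $\nabla\psi^0_{n,\ell}$, the first contribution is $\lesssim\|w\|_{L^2}\lesssim\|W\|$ by Cauchy--Schwarz, and the second is $\lesssim\|w\|_{L^{p+1}}^p\lesssim\|W\|^p\le\|W\|$ by Sobolev embedding and smallness ($p>1$). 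This recovers exactly the claimed $C\|W\|$ and the rest of your argument goes through. (Note that \eqref{sur.w1} is the appropriate tool elsewhere, e.g.\ in the energy estimates, where a weaker power of $\|W\|$ suffices; here you need the linear-in-$|w|$ structure.) The remaining parts of your plan --- the transport identity for $\partial_t\Psi^0_{n,\ell}$, the handling of the linear terms by integration by parts, the use of \eqref{sur.g} for the $G$-term, and the inversion of $\mathcal H$ --- are correct and essentially identical to the paper's proof.
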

\begin{proof}
We differentiate the orthogonality $\langle W,\Psi^0_{n,\ell}\rangle=0$ from \eqref{d:b},
using \eqref{NLKGw},
\begin{align*}
0 
= \frac {d}{dt} \langle W,\Psi^0_{n,\ell}\rangle  &
= \langle \partial_t W,\Psi^0_{n,k}\rangle - \beta_n \langle W, \nabla \Psi^0_{n,k}\rangle\\
& = 
\left\langle\begin{pmatrix} \beta_n \cdot\nabla w + z
\\  \Delta w  -w + f(\br +\bpsi+w)-f(\br +\bpsi) + \beta_n \cdot\nabla z\end{pmatrix} ,\Psi^0_{n,k}\right\rangle \\
& + \sum_{(n',\ell')\in I^0} \dot b_{n',\ell'} \langle \Psi^0_{n',\ell'},\Psi^0_{n,\ell}\rangle + \langle G, \Psi^0_{n,k}\rangle.
\end{align*}
We see that the first term of the equality is bounded by $C \| W \|$, by performing integration by parts so that derivatives fall on the components of $\Psi^0_{n,k}$ and using \eqref{e:un}. Using \eqref{sur.g} and  the notation of the proof of Lemma~\ref{le:Wbis}, we obtain
\[
|\mathcal H b | \le C \left( \|W\| + e^{-10\omega t} +  |b|^{p_0}\right),
\]
and the first estimate in \eqref{dotb} follows from the the matrix $\mathcal H^{-1}$ being uniformly bounded, and the second, from the bootstrap estimate \eqref{eq:BS}.
\end{proof}

\subsection{Energy properties}
We let
\[
 \mathcal E_W 
  = \frac 12 \int   |z|^2 + |\nabla w|^2+|w|^2
  -2 \left[F\left(\br + \bpsi+w\right) -  F\left(\br + \bpsi \right) - w f\left(\br + \bpsi\right)\right]
\]
We consider a $C^\infty$ radial function $\chi:\R^d\to\R$ such that
\begin{align}\label{d:chi}
\chi(x)=0 \text{ for $|x|\ge 2$},\quad
\chi(x)=1 \text{ for $|x|< 1$},\quad
0\le \chi \le 1 \text{ on $\R^d$}.
\end{align} 
We set
\begin{align}\label{d:Pk}
\mathcal P_n= \frac 12 \int \chi_n  z \nabla w ,\quad
\chi_n(t,x)=\chi\left(\frac {x-\beta_n t}{\delta t}\right),
\end{align}
where
\[
\delta = \frac 1{10} \min \{|\beta_n - \beta_{n'}| : \ 1 \le n,n' \le N , \ n \ne n' \}
\]
and
\begin{align}
\mathcal F(t) = \mathcal E_W(t) + 2 \sum_{n=1}^N \beta_n\cdot \mathcal P_n(t).
\end{align}
\begin{lem}\label{le:E}  For all $t\in [S_\star,S_0]$,
\begin{equation}\label{e:FV}
\left| \frac{d}{dt} \mathcal F (t) \right| \le \frac {C}t e^{-2 \omega t}, \end{equation}
\begin{equation}\label{c:F}
 \|W(t)\|^2 \le C\left ( \mathcal F(t) +|a(t)|^2 +e^{-10\omega t}\right).
\end{equation}
\end{lem}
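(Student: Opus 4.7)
The lemma splits into a control of the time derivative \eqref{e:FV} and a coercivity estimate \eqref{c:F}; I treat them in turn.

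For \eqref{e:FV}, the plan is to differentiate $\mathcal E_W$ and each $\beta_n\cdot \mathcal P_n$ directly, substituting the evolution equation \eqref{NLKGw} for $W$, and then to exploit the standard cancellation between soliton translations and the Galilean correction $2\beta_n\cdot \mathcal P_n$ (as in \cite{CoMu}). In $\frac{d}{dt}\mathcal E_W$, the leading ``linear'' part $\int z\bigl(-\Delta w+w-f(\br+\bpsi+w)+f(\br+\bpsi)\bigr)$ cancels the term produced by $\partial_t z$ in \eqref{NLKGw} after integration by parts; what remains splits into (i) a drift piece originating from $\partial_t(\br+\bpsi)=-\sum_n\beta_n\cdot\nabla r_n+\partial_t\bpsi$ passing through the nonlinearity, (ii) contributions of $\dot b$ via the decomposition $V=W+\sum b_{n,\ell}\Psi^0_{n,\ell}$, and (iii) the soliton–interaction remainder $\bg$. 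The main drift in (i) is exactly cancelled by $\sum_n 2\beta_n\cdot\frac{d}{dt}\mathcal P_n$ in the region $\{\chi_n\equiv 1\}$; the leftover comes from $\partial_t\chi_n$ and $\nabla\chi_n$ hitting $z\nabla w$. These cutoff derivatives carry a prefactor $1/t$ and are supported in the transition annulus $|x-\beta_n t|\sim\delta t$, where $r_n$ and its derivatives are exponentially small by \eqref{def:rho}; this produces a contribution of order $Ct^{-1}e^{-10\omega t}\|W\|^2$. The remaining pieces (ii)--(iii) are bounded using Lemma~\ref{le:dotb}, \eqref{sur.g}, and the bootstrap \eqref{eq:BS}: in particular $|b|^{p_0}\|W\|\le e^{-\frac{p_0+3}{2}\omega t}\le e^{-2\omega t}$ since $p_0>1$. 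Collecting everything yields \eqref{e:FV}.

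For the coercivity \eqref{c:F}, the plan is to approximate $\mathcal F$ by its quadratic part in $W$ and apply Proposition~\ref{prop:coer_H} to each soliton separately. By \eqref{sur.F},
\[ \mathcal E_W = \tfrac12\int\big(|z|^2+|\nabla w|^2+|w|^2 - f'(\br+\bpsi)\,w^2\big) + O(\|W\|^{p_0+1}+\|W\|^{p+1}), \]
and the higher-order remainder is absorbed thanks to $\|W\|\le e^{-\omega t}$. Next, I decompose $W$ using a smooth partition of unity attached to the soliton centers $\beta_n t$, at spatial scale $\delta t$. Since the solitons are separated by distances of order $t$, each cross term between distinct solitons is, by \eqref{def:rho}, of order $e^{-10\omega t}$. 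On the support of the $n$-th piece, $f'(\br+\bpsi)=f'(r_n)+O(e^{-5\omega t}+|b|^{p_0-1})$, and the cross term $2\beta_n\cdot\int\chi_n z\nabla w$ supplies precisely the off-diagonal $\beta_n\cdot\nabla$ entries of $\opH_n$ in the soliton frame. After translation $x\mapsto x-\beta_n t$, the $n$-th localized quadratic form therefore equals $\langle\opH_n W^{(n)},W^{(n)}\rangle$ up to the above errors, with $W^{(n)}$ the translated localized piece. Proposition~\ref{prop:coer_H} then yields $\|W^{(n)}\|^2$ coercivity modulo projections on $Y^{\pm}_{n,k}$ and on $\ker\opH_n=\Span(\Phi^0_{n,\ell})$. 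The kernel contribution vanishes thanks to $\langle W,\Psi^0_{n,\ell}\rangle=0$ from Lemma~\ref{le:Wbis}, and \eqref{eq:W_a} identifies the $Y^{\pm}_{n,k}$-projections with $|a^{\pm}_{n,k}|^2+O(|b|^2 e^{-20\omega t})$. Summing over $n$ produces \eqref{c:F}.

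The main technical obstacle lies in the localization step for \eqref{c:F}: the quadratic form $\mathcal E_W$ is global, yet the coercivity supplied by Proposition~\ref{prop:coer_H} concerns the single-soliton boosted operator $\opH_n$. The cutoffs must be chosen fine enough to resolve individual solitons while coarse enough not to spoil the gradient terms; the Galilean piece $2\beta_n\cdot\int\chi_n z\nabla w$ must reconstruct exactly the off-diagonal entries of $\opH_n$ after translation; and the transition zones must contribute only at the $e^{-10\omega t}$ level. Once this localization is carried out, the abstract coercivity of Proposition~\ref{prop:coer_H} combined with the orthogonalities $\langle W,\Psi^0_{n,\ell}\rangle=0$ of Lemma~\ref{le:Wbis} closes the argument without requiring any a priori information on the spectrum of $\opL_n$.
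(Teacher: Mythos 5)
Your proof follows the same two-step structure as the paper's: for \eqref{e:FV}, differentiate $\mathcal E_W$ and $\beta_n\cdot\mathcal P_n$ using the evolution equation \eqref{NLKGw}, exploit the cancellation between the drift $\partial_t\br$ passing through the nonlinearity and the corresponding term in $2\beta_n\cdot\frac{d}{dt}\mathcal P_n$, and close via the bootstrap \eqref{eq:BS}; for \eqref{c:F}, expand $\mathcal F$ as a sum of localized quadratic forms $\langle\opH_n\cdot,\cdot\rangle$ up to $O(\|W\|^{p_0+1}+e^{-10\omega t})$, invoke Proposition~\ref{prop:coer_H} on each soliton, and control the remaining projections using $\langle W,\Psi^0_{n,\ell}\rangle=0$ from Lemma~\ref{le:Wbis} and the identification \eqref{eq:W_a}.

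One misattribution is worth correcting. You claim the cutoff terms, in which $\partial_t\chi_n$ or $\nabla\chi_n$ fall on $z\nabla w$, are of order $Ct^{-1}e^{-10\omega t}\|W\|^2$, attributing the exponential gain to the smallness of $r_n$ in the transition annulus $|x-\beta_n t|\sim\delta t$. But $z\nabla w$ contains no trace of $r_n$; these terms are bounded only by $C\|\nabla\chi_n\|_{L^\infty}\|W\|^2\le Ct^{-1}\|W\|^2$. Far from being negligible, this is the \emph{dominant} contribution to $\frac{d}{dt}\mathcal F$: it is precisely what produces the $t^{-1}$ in \eqref{e:FV}, and the factor $e^{-2\omega t}$ then comes from the bootstrap $\|W\|\le e^{-\omega t}$, not from the decay of $r_n$. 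The exponential smallness of $r_n$ outside $\{|x-\beta_n t|<\delta t\}$ is what controls the genuinely $r_n$-dependent leftover, namely $\int(1-\chi_n)\,\nabla r_n\bigl[f(\br+\bpsi+w)-f(\br+\bpsi)-wf'(\br+\bpsi)\bigr]$, which is $O(e^{-10\omega t}\|W\|^{p_0})$ by \eqref{sur.w2}. The final estimate is unaffected since your (too strong) claimed bound is anyway below the needed one, but the mechanism you describe is not the one at work.
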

\begin{proof}\emph{Proof of \eqref{e:FV}.}
First, we see that
\begin{align*}
\frac {d\mathcal E_W}{dt} & =
\int z \partial_t z +\int \partial_t w \left[- \Delta w +w - f\left(\br +\bpsi +w \right) + f\left(\br +\bpsi\right)\right]\\
&\quad -  \int \partial_t (\br +\bpsi)\left[f\left(\br +\bpsi +w \right) - f\left(\br +\bpsi \right)- w f'\left(\br +\bpsi\right)\right]\end{align*}
Using \eqref{NLKGw},
\begin{align*}
\int z \partial_t z & = \int (\partial_t w + \sum_{n,\ell} \dot b_{n,\ell} \psi^0_{n,\ell}) [\Delta w - w + f(\br +\bpsi+w) - f(\br +\bpsi)]\\
& \quad + \int z (\sum_{n,\ell} \dot b_{n,\ell} \beta_{n} \cdot \nabla \psi^0_{n,\ell} + \bg ).\end{align*}
Now, we claim
\begin{align}\label{e:Ln}
\MoveEqLeft \int \psi^0_{n,\ell} [\Delta w - w + f(\br +\bpsi+w) - f(\br +\bpsi)] \nonumber\\
& = \int w(\beta_n\cdot \nabla)^2 \psi^0_{n,\ell} +O( \|w\|_{L^2}^{p_0}+\|w\|_{L^2} \left( e^{-10\omega t} + |b|) \right)).
\end{align}
Indeed, using \eqref{Lbetan},
\begin{align*}
\MoveEqLeft \int \psi^0_{n,\ell} [\Delta w - w + f(\br +\bpsi+w) - f(\br +\bpsi)]\\
& = \int \psi^0_{n,\ell} [\Delta w - w + w f'(r_{n})] + \int \psi^0_{n,\ell} \left[f(\br +\bpsi+w) - f(\br +\bpsi) - w f'(r_{n})\right]\\
& = \int w(\beta_n\cdot \nabla)^2 \psi^0_{n,\ell} + \int \psi^0_{n,\ell} \left[f(\br +\bpsi+w) - f(\br +\bpsi) - w f'(r_{n})\right].
\end{align*}
Moreover, by \eqref{sur.w2} and \eqref{e:un},
\begin{align*}
\MoveEqLeft \left| \int \psi^0_{n,\ell} \left[f(\br +\bpsi+w) - f(\br +\bpsi) - w f'(r_{n})\right]\right| \\
& \le C  \int \rho_n \left| f(\br +\bpsi+w) - f(\br +\bpsi) - w f'(\br +\bpsi) \right| \\
& \quad  + C\int \rho_n  |w| \left|  f'(\br +\bpsi)-f'(\br )\right| + C \int \rho_n  |w| \left|  f'(\br )-f'(r_{n})\right| \\
&  \le C\|w\|_{L^2}^{p_0}+C \|w\|_{L^2} \left( e^{-10\omega t} + |b|) \right),
\end{align*}
which proves \eqref{e:Ln}. 

Next, using \eqref{sur.g}, we have
\begin{equation}\label{zgRzGN}
\left|\int z \bg \right| \le C (e^{-10\omega t} + |b|^{p_0} )\|z\|_{L^2}.
\end{equation}
Thus,
\begin{align*}
\MoveEqLeft \int z \partial_t z +\int \partial_t w \left[- \Delta w +w - f\left(\br +\bpsi +w \right) + f\left(\br +\bpsi\right)\right]\\
& =  \int  w\left( \sum_{(n,\ell) \in I^0} \dot b_{n,\ell}     (\beta_n\cdot \nabla)^2 \psi^0_{n,\ell} \right)
+\int z \left(\sum_{(n,\ell) \in I^0} \dot b_{n,\ell} ( \beta_{n} \cdot \nabla \psi^0_{n',\ell})\right)
\\ 
& \quad + O\left(|\dot b| \left[ \|w\|_{L^2}^{p_0}+\|w\|_{L^2} \left( e^{-10 \omega t} + |b|\right)\right]\right) 
 +O\left( \|z\|_{L^2}(e^{-10\omega t} + |b|^{p_0} ) \right).
\end{align*}
Finally, we have
\begin{align*}
\MoveEqLeft -\int \partial_t \br \left[f\left(\br +\bpsi +w \right) - f\left(\br +\bpsi \right)- w f'\left(\br +\bpsi\right)\right]\\
& =  
\sum_{n=1}^N \int (\beta_n \cdot \nabla r_n) \left[f\left(\br +\bpsi +w  \right) - f\left(\br +\bpsi  \right)- w f'\left(\br +\bpsi \right)\right],\end{align*}
and, since $\partial_t(b_{n,\ell} \psi^0_{n,\ell})  = \dot b_{n,\ell} \psi^0_{n,\ell}
- b_{n,\ell} (\beta_n \cdot \nabla  \psi^0_{n,\ell})$, by \eqref{sur.w2},
\begin{align*}
 -\int \partial_t \bpsi \left[f\left(\br +\bpsi +w \right) - f\left(\br +\bpsi \right)- w f'\left(\br +\bpsi\right)\right]  = O\left((|b|+|\dot b|) \|W\|^2 \right).
 \end{align*}

Thus, in conclusion, using also \eqref{dotb} to control $|\dot b|\le C\left(\|W\|+ e^{-10\omega t}+|b|^{p_0}\right)$,
\begin{align}\label{e:dtEV}\begin{aligned}
\frac {d\mathcal E_W}{dt} & = 
\int  w\left( \sum_{(n,\ell) \in I^0} \dot b_{n,\ell}     (\beta_n\cdot \nabla)^2 \psi^0_{n,\ell} \right)
+\int z \left(\sum_{(n,\ell) \in I^0} \dot b_{n,\ell} ( \beta_{n} \cdot \nabla \psi^0_{n,\ell})\right)
\\
& \quad +\sum_n \int (\beta_n \cdot \nabla r_n) \left[f\left(\br +\bpsi +w  \right) - f\left(\br +\bpsi \right)- w f'\left(\br +\bpsi \right)\right] \\ 
 & \quad + O\left(   \|W\|^{p_0+1}+  |b|  \|W\|^{p_0}  + (e^{-10\omega t} + |b|^{p_0} ) \|W\| \right).
\end{aligned}\end{align}

Now, we compute
\begin{align*}
\frac{d \mathcal P_n}{d t}   & =  
\frac 12 \int (\partial_t \chi_n ) z \nabla w + \frac 12 \int \chi_n  z   \nabla \partial_t w  
 + \frac 12 \int \chi_n   \nabla w \, \partial_t z\\
    &  = -\frac 12 \int\left( \frac x{t}\cdot \nabla \chi_n\right) z \nabla w
    + \frac 12\int \chi_n  z \nabla z - \frac 12 \int \chi_n z \left(\sum_{n',\ell} \dot b_{n',\ell}\nabla \psi^0_{n',\ell} \right)\\
    & \quad  +\frac 12 \int \chi_n\Delta w  \nabla w 
    -\frac 12 \int \chi_n w \nabla w
  +\frac 12 \int \chi_n \nabla w \left( f\left(\br +\bpsi +w \right)-f\left(\br +\bpsi \right)\right)  \\&  \quad
  +\frac 12 \int \chi_n \nabla w \left(\sum_{n',\ell} \dot b_{n',\ell} (\beta_{n'}\cdot \nabla \psi^0_{n',\ell})\right) +\frac 12 \int \chi_n \bg   \nabla w.
   \end{align*}
Integrating by parts, this writes
\begin{align*}
\frac{d \mathcal P_n}{d t} 
&  = -\frac 12 \int\left( \frac x{t}\cdot \nabla \chi_n\right) z \nabla w
-\frac 1 4 \int  \nabla \chi_n  z^2-\frac 14 \int \nabla \chi_n |\nabla w|^2 \\ 
& \quad + \frac 12 \int( \nabla \chi_n  \cdot\nabla w)\nabla w
+ \frac 14 \int w^2 \nabla \chi_n \\
& \quad-\frac 12 \int \nabla \chi_n \left[ F\left(\br +\bpsi +w\right)-F\left(\br +\bpsi \right)-w f\left(\br +\bpsi \right)\right] \\
& \quad-\frac 12\int  \chi_n \nabla (\br +\bpsi)\left[f\left(\br +\bpsi +w\right) - f\left(\br +\bpsi \right)- w f'\left(\br +\bpsi \right)\right]\\ 
& \quad - \frac 12 \int \chi_n z \left(\sum_{n',\ell} \dot b_{n',\ell}\nabla \psi^0_{n',\ell} \right)
+\frac 12 \int \chi_n \nabla w \left(\sum_{n',\ell} \dot b_{n',\ell} (\beta_{n'}\cdot \nabla \psi^0_{n',\ell})\right)\\
& \quad +  \frac 12 \int \chi_n  \bg  \nabla w.
 \end{align*}
For the terms on the first 3 lines, we use \eqref{sur.F} to bound
\begin{align*}
\MoveEqLeft \left| \int\left( \frac x{t}\cdot \nabla \chi_n\right) z\nabla w \right|+
\left| \int  \nabla \chi_n  z^2\right|+
\left| \int \nabla \chi_n |\nabla w|^2\right|+
\left|\int( \nabla \chi_n  \cdot\nabla w)\nabla w \right|\\
\MoveEqLeft +
\left| \int \nabla \chi_n  w^2\right|+
\left| \int \nabla \chi_n \left[ F\left(\br +\bpsi +w\right)-F\left(\br +\bpsi \right)-w f\left(\br +\bpsi\right)\right]\right| \\
& \le C \|\nabla \chi_n\|_{L^\infty}  \int \left(z^2 + |\nabla w|^2 + w^2 + |w|^{p+1} \right) \le \frac{C}{t} \|W\|^2,
\end{align*}
For the fourth line, using \eqref{sur.w2}, we have
\begin{align*}
\MoveEqLeft \left|\int  (1-\chi_n) \nabla r_n \left[f\left(\br +\bpsi +w \right) - f\left(\br +\bpsi \right)- w f'\left(\br +\bpsi \right)\right]\right|  \\
    & \le C  \|\nabla q_{n,\beta_n}\|_{L^\infty( |x| > \delta t )}\|W\|^{p_0}
    \le C e^{-10 \omega t}   \|W\|^{p_0},
\end{align*}
and for $n'\neq n$, using \eqref{expo},
\begin{align*}
\left| \int \chi_n z    \nabla \psi^0_{n',\ell}  \right| + \left| \int \chi_n \nabla w      (\beta_{n'}\cdot \nabla \psi^0_{n',\ell})\right|\le C e^{-10\omega t}   \|W\|.
\end{align*}
Moreover, by \eqref{sur.g}
\[
\left|\int \chi_n \bg  \nabla w  \right| \le C ( e^{-10\omega t} + |b|^{p_0}) \| W \|
\]
Thus, in conclusion for this term
\begin{align}\label{e:Pk}\begin{aligned}
  \frac{d \mathcal P_n}{d t}   &=- \frac 12 \int  z \left(\sum_{\ell=1}^{\bar \ell_n} \dot b_{n,\ell}\nabla \psi^0_{n,\ell} \right)
    +\frac 12 \int  \nabla w \left(\sum_{\ell=1}^{\bar \ell_n} \dot b_{n,\ell} (\beta_{n}\cdot \nabla \psi^0_{n,\ell})\right)\\ &\quad
+\frac 12\int  \nabla r_n \left[f\left(\br +\bpsi +w \right) - f\left(\br +\bpsi  \right)- w f'\left(\br +\bpsi \right)\right]  \\
&\quad +O\left( \frac 1t \|W\|^2+ ( e^{-10\omega t} + |b|^{p_0} ) \|W\|\right)
\end{aligned}\end{align}
Combining \eqref{e:dtEV} and \eqref{e:Pk}, we find
\begin{align}\label{e:FVbis}
\left| \frac{d \mathcal F}{d t} \right| \le C\left(
   \|W\|^{p_0+1}+  \left( |b| +t^{-1}\right) \|W\|^2  
 + (e^{-10\omega t} + |b|^{p_0} ) \|W\| \right) \end{align}
Using \eqref{eq:BS}, we find \eqref{e:FV}.

\smallskip

\emph{Proof of \eqref{c:F}.}

Expanding $\q F(t)$ we get that
\[ \q F(t) = \sum_{n=1}^N \langle \tilde \opH_n(t) W,W \rangle  + O(\| W \|^{p_0+1} + e^{-10 \omega t}), \]
 where $\tilde \opH_n(t)$ is the analog of $\opH_n$, localized on the ball $B(0,\delta t)$ and translated by $\beta_n t$. Using standard localization arguments and Proposition~\ref{prop:coer_H}, we infer the following property 
\begin{equation}\label{c:Fbis}
 \|W(t)\|^2 \le C\left ( \mathcal F(t) +\sum_{(n,k)\in I,\pm} |\langle W,Z_{n,k}^\pm\rangle |^2
 +\sum_{(n,\ell)\in I^0} |\langle W,\Psi^0_{n,\ell}\rangle |^2  + e^{-10 \omega t} \right) .
\end{equation}
We refer e.g. to \cite{MMT} for further details.

Recall that by construction \eqref{d:b}, $\langle W,\Psi^0_{n,\ell}\rangle=0$. Finally by \eqref{eq:W_a} (and the boostrap \eqref{eq:BS}), we have
\begin{gather*}
|\langle W,Z_{n,k}^\pm\rangle | \le |a_{n,k}^\pm(t)| +  C |b(t)| e^{-10\omega t} \le C (|a|+ e^{-10\omega t}). \qedhere
\end{gather*}
\end{proof}

\subsection{Negative directions. Transversality at $S_\star$}
Recall that we have set $
a^{\pm}_{n,k}=\langle V,Z^{\pm}_{n,k}\rangle.$
\begin{lem}[Negative directions]\label{le:dota}
For all $t\in [S_\star,S_0]$,
\begin{equation}\label{dota}
\left| \dot a^{\pm}_{n,k} (t) \pm \frac{\lambda_{n,k}}{\gamma_n} a^{\pm}_{n,k}(t)\right|
\le  C e^{- \frac{1}{2} (p_0+1) \omega t}.
\end{equation}
\end{lem}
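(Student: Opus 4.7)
My plan is to differentiate $a^\pm_{n,k}(t)=\langle V(t),Z^\pm_{n,k}(t)\rangle$ in time and use the spectral identity for $Y^\pm_{n,k}$ to isolate the linear term $\mp(\lambda_{n,k}/\gamma_n)a^\pm_{n,k}$, leaving only a genuinely nonlinear/interaction remainder. Write $Y^\pm_{n,k}=(y_1,y_2)^\top$ and $\opH_n Y^\pm_{n,k}=:(\zeta_1,\zeta_2)^\top$; the relation $\opJ\opH_n Y^\pm_{n,k}=\pm(\lambda_{n,k}/\gamma_n)Y^\pm_{n,k}$ reads componentwise $\zeta_2=\pm(\lambda_{n,k}/\gamma_n)y_1$ and $\zeta_1=\mp(\lambda_{n,k}/\gamma_n)y_2$. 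Denote the travelling profiles by $\tilde\zeta_j(t,x)=\zeta_j(x-\beta_n t)$, so $\partial_t\tilde\zeta_j=-\beta_n\!\cdot\!\nabla\tilde\zeta_j$. Using $\partial_{tt}v=\Delta v-v+f(\br+v)-\sum_m f(r_m)$ from \eqref{e:V} and one integration by parts in the Laplacian,
\[
\dot a^\pm_{n,k}=\int \partial_tv\,(\tilde\zeta_1-\beta_n\!\cdot\!\nabla\tilde\zeta_2) +\int v\,(\Delta\tilde\zeta_2-\tilde\zeta_2-\beta_n\!\cdot\!\nabla\tilde\zeta_1) +\int \tilde\zeta_2\Bigl[f(\br+v)-\sum_{m} f(r_m)\Bigr].
\]

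A short computation using the eigenvalue relations (and, for the second identity, the formula $\Delta y_1-y_1=\pm(\lambda_{n,k}/\gamma_n)y_2-f'(q_{n,\beta_n})y_1-\beta_n\!\cdot\!\nabla y_2$ extracted from the second component of $\opJ\opH_n Y^\pm_{n,k}=\pm(\lambda_{n,k}/\gamma_n)Y^\pm_{n,k}$) gives
\[
\tilde\zeta_1-\beta_n\!\cdot\!\nabla\tilde\zeta_2=\mp\tfrac{\lambda_{n,k}}{\gamma_n}\tilde\zeta_2,\qquad \Delta\tilde\zeta_2-\tilde\zeta_2-\beta_n\!\cdot\!\nabla\tilde\zeta_1=\mp\tfrac{\lambda_{n,k}}{\gamma_n}\tilde\zeta_1-f'(r_n)\tilde\zeta_2,
\]
the $\beta_n\!\cdot\!\nabla\tilde y_2$ terms cancelling exactly in the second identity. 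Substituting these into the display above and recognising $a^\pm_{n,k}=\int v\,\tilde\zeta_1+\int \partial_tv\,\tilde\zeta_2$ yields the exact identity
\[
\dot a^\pm_{n,k}\pm\tfrac{\lambda_{n,k}}{\gamma_n}\,a^\pm_{n,k}=\int \tilde\zeta_2\Bigl[f(\br+v)-\sum_{m}f(r_m)-f'(r_n)v\Bigr].
\]

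It remains to bound the right-hand side. I would decompose the bracket as $[f(\br)-\sum_m f(r_m)]+[f(\br+v)-f(\br)-f'(\br)v]+(f'(\br)-f'(r_n))v$ and estimate each piece using \eqref{e:deux}--\eqref{e:untrois}, the pointwise decay $|\tilde\zeta_2|\le C\rho_n$ from \eqref{expo}, and the separation estimate \eqref{def:rho}. The first summand is $O(e^{-10\omega t}\brho)$, and, multiplied by $\tilde\zeta_2$ and integrated, contributes $O(e^{-10\omega t})$; the third one is $O(e^{-10\omega t}\brho|v|)$ pointwise, contributing $O(e^{-10\omega t}\|v\|_{L^2})$; the middle one is pointwise $\le C|v|^{p_0}$, giving $\int \rho_n|v|^{p_0}\le C(\|W\|^{p_0}+|b|^{p_0})$ after splitting $v=w+\bpsi$ and applying Hölder's inequality with the measure $\rho_n\,dx$. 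Finally the bootstrap \eqref{eq:BS} bounds the right-hand side by $Ce^{-\frac12(p_0+1)\omega t}$ -- the dominant contribution being $|b|^{p_0}$ since $p_0>1$ forces $p_0\omega\ge \tfrac12(p_0+1)\omega$ -- which is precisely \eqref{dota}. The only non-routine step is the brief algebraic reduction leading to the two identities above; once that is done, everything else reuses the estimates already assembled in the previous subsections.
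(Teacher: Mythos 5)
Your proof is correct and follows essentially the same approach as the paper: differentiate $a^\pm_{n,k}=\langle V,Z^\pm_{n,k}\rangle$, use the eigenvalue relation $\opJ\opH_n Y^\pm_{n,k}=\pm(\lambda_{n,k}/\gamma_n)Y^\pm_{n,k}$ together with self-adjointness of $\opH_n$ and anti-symmetry of $\opJ$ to produce the exact linear term $\mp(\lambda_{n,k}/\gamma_n)a^\pm_{n,k}$, and bound the remaining interaction term $\int\tilde\zeta_2[f(\br+v)-\sum_m f(r_m)-f'(r_n)v]$ by $C(\|W\|^{p_0}+|b|^{p_0}+e^{-10\omega t})$. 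The paper does the algebraic reduction in one line via the operator identity $\langle\opJ\opH_n V,\opH_n Y^\pm_{n,k}\rangle=-\langle V,\opH_n\opJ\opH_n Y^\pm_{n,k}\rangle$, whereas you unroll it into two scalar identities for $\tilde\zeta_1,\tilde\zeta_2$; these are equivalent. Two small remarks: (1) in your third summand it is the product $\tilde\zeta_2\,(f'(\br)-f'(r_n))$ that is $O(e^{-10\omega t}\brho)$, not the factor $f'(\br)-f'(r_n)$ alone (which is only $O(\sum_{m\ne n}|r_m|^{p_0-1})$), though the resulting integral bound is correct; (2) the paper's displayed expression $g_n=f(\br+v)-f(\br)-f'(r_n)v$ contains a typo — it should read $g_n=f(\br+v)-\sum_m f(r_m)-f'(r_n)v$, which is exactly the remainder your computation produces and the one the paper then actually estimates.
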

\begin{proof}
From \eqref{e:V}, we rewrite the equation of $V$ as follows
\[
\partial_t V
  =  \begin{pmatrix} \partial_t v   \\  \Delta v -v+ f\left(\br + v\right)
-f\left(\br \right) \end{pmatrix}  
=  \begin{pmatrix} 0 & 1  \\  \Delta  -1+ f'\left(r_n\right) & 0 \end{pmatrix} V+G_n,
\]
where
\[
G_n = \begin{pmatrix} 0  \\ g_n  \end{pmatrix},\quad g_n = f\left(\br + v\right)-f\left(\br \right)-f'(r_n) v.
\] 
Then, by \eqref{e.Y}
\begin{align*}
\frac {d}{dt} a_{n,k}^\pm &
= \langle \partial_t V,Z_{n,k}^\pm\rangle - \beta_n \langle V, \nabla Z_{n,k}^\pm\rangle\\
& = \left\langle  \begin{pmatrix} \beta_n \cdot\nabla & 1  \\  \Delta  -1+ f'\left(r_n\right) & \beta_n \cdot\nabla\end{pmatrix} V
,Z_{n,k}^\pm\right\rangle  + \langle G_n, Z_{n,k}^\pm\rangle\\
& = \left\langle \opJ \opH_n V(.+\beta_n t)
, \opH_n Y_{n,k}^\pm\right\rangle  + \langle G_n, Z_{n,k}^\pm\rangle\\
& = - \left\langle  V(.+\beta_n t)
, \opH_n (\opJ \opH_n Y_{n,k}^\pm) \right\rangle  + \langle G_n, Z_{n,k}^\pm\rangle
= \mp \frac{\lambda_{n,k}}{\gamma_n} a_{n,k}^{\pm}  + \langle G_n, Z_{n,k}^\pm\rangle.
\end{align*} 
Now, we estimate $\langle G_n, Z_{n,k}^\pm\rangle$. By \eqref{e:un} and \eqref{e:undeux}, one has
\begin{align*}
|g_n| & = |f(\br +v) - f(r_n) - f'(r_n) v|\\
&\le |f(\br +v)-f(r_n+v)| + |f(r_n+v)-f(r_n)-f'(r_n)v|\\
&\le \sum_{n', n'\ne n} |f(r_n')| + C e^{-10 \omega t} \brho + \sum_{n', n'\ne n} |v|^{p_0-1} |r_{n'}|^{p_0-1}
+ C |v|^{p_0}.
\end{align*}
Thus,
\[
|\langle G_n, Z_{n,k}^\pm\rangle|\le C \|V\|^{p_0} + C e^{-10 \omega t}
\le C \|W\|^{p_0}+ C |b|^{p_0} + C e^{-10 \omega t}.
\]
Therefore, we have obtained, using the bootstrap estimates \eqref{eq:BS} for the final bound,
\begin{align*}
\left| \dot a^{\pm}_{n,k}   \pm \frac{\lambda_{n,k}}{\gamma_n} a^{\pm}_{n,k} \right|
  &\le C \left(  \|W\|^{p_0} + |b|^{p_0} + C e^{-10\omega t}\right)\le  C e^{- \frac 1 2 (p_0+1) \omega t}. \qedhere
\end{align*}
\end{proof}

We close the estimates for $\|W\|$, $|b|$ and $|a^-|$ in the following result.
\begin{lem}
For all $t\in [S_\star,S_0]$,
\begin{equation}\label{close}
\|W(t)\|\le \frac 12 e^{- \omega t},\quad |b(t)|\le \frac 12 e^{- \omega t},\quad
|a^-(t)|\le \frac 12 e^{-\frac{1}{3} (p_0+2) \omega t}.
\end{equation}
Moreover,
\begin{equation}\label{saturation}
|a^+(S_\star)| = e^{-\frac{1}{3} (p_0+2) \omega S_\star}.
\end{equation}
\end{lem}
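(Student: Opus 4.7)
The plan is to show that, along $[S_\star, S_0]$, the three bootstrap bounds in \eqref{eq:BS} on $\|W\|$, $|b|$ and $|a^-|$ can each be strictly improved by a factor $1/2$, so that, by continuity and the maximality definition of $S_\star$, if $S_\star > T_0$ the only bound that can saturate at $t = S_\star$ is the one on $|a^+|$, yielding \eqref{saturation}. The exponents in \eqref{eq:BS} are calibrated so that each closure leaves a positive exponential gap, which translates into a smallness condition of the form $e^{-c(p_0-1)\omega T_0} \ll 1$; these are the (only) requirements forcing $T_0$ to be large.

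For the energy variable, I combine the coercivity \eqref{c:F} with two inputs. First, integrating \eqref{e:FV} backward on $[t, S_0]$ gives $|\mathcal{F}(t) - \mathcal{F}(S_0)| \le (C/\omega t) e^{-2\omega t}$. Second, from \eqref{initialW} and the quadratic-plus-higher structure of $\mathcal{F}$, I get $|\mathcal{F}(S_0)| \le C \|W(S_0)\|^2 \le C e^{-\frac{1}{2}(p_0+3)\omega S_0}$, which, since $\frac{1}{2}(p_0+3) > 2$ when $p_0 > 1$, is bounded by $e^{-2\omega t}$ times a factor $e^{-\frac{p_0-1}{2}\omega T_0}$ arbitrarily small. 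Combined with the bootstrap $|a(t)|^2 \le 4 e^{-\frac{2}{3}(p_0+2)\omega t}$, whose exponent again beats $2\omega$ by the same $p_0-1>0$ margin, \eqref{c:F} yields $\|W(t)\|^2 \le \frac{1}{4} e^{-2\omega t}$.

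For $|b|$, the bootstrap gives $|b|^{p_0} \le e^{-\frac{p_0+1}{2}\omega t} \le e^{-\omega t}$, so \eqref{dotb} produces $|\dot b(t)| \le C e^{-\omega t}$; integrating backward from $S_0$ with $|b(S_0)| \le C e^{-10\omega S_0}$ from \eqref{initialW} gives $|b(t)| \le C e^{-\omega t}$, which beats $\frac{1}{2} e^{-\frac{1}{2}(1+1/p_0)\omega t}$ since $1 > \frac{1}{2}(1+1/p_0)$ for $p_0 > 1$. For $|a^-|$, I rewrite \eqref{dota} as $\frac{d}{dt}(e^{-\lambda_{n,k} t/\gamma_n} a^-_{n,k}) = e^{-\lambda_{n,k} t/\gamma_n} h_{n,k}(t)$ with $|h_{n,k}(t)| \le C e^{-\frac{1}{2}(p_0+1)\omega t}$, integrate on $[t, S_0]$ using the initial condition $a^-(S_0) = 0$ supplied by Lemma \ref{le:aa}, and obtain $|a^-(t)| \le C e^{-\frac{1}{2}(p_0+1)\omega t}$, which is strictly better than the $e^{-\frac{1}{3}(p_0+2)\omega t}$ bootstrap rate since $\frac{p_0+1}{2} - \frac{p_0+2}{3} = \frac{p_0-1}{6} > 0$.

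Once the three strict improvements are in hand, \eqref{close} follows, and \eqref{saturation} is immediate: by continuity at $S_\star$ one of the four bootstrap bounds must be saturated there, and we have just ruled out the other three. The main (mild) difficulty is the bookkeeping of exponents --- one must check simultaneous compatibility of \eqref{e:FV}, \eqref{dotb}, \eqref{dota} and \eqref{initialW} with the prescribed rates $\omega$ for $\|W\|$, $\frac{1}{2}(1+1/p_0)\omega$ for $|b|$, and $\frac{1}{3}(p_0+2)\omega$ for $|a^\pm|$. All four required gaps reduce to the single inequality $p_0 > 1$, which holds since $p > 1$; the hard analytic work was already absorbed in deriving \eqref{e:FV}, \eqref{dotb}, \eqref{dota} and in Lemma \ref{le:aa}.
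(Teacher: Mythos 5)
Your proposal is correct and follows essentially the same route as the paper: integrate \eqref{dotb} backward from $S_0$ for $|b|$, integrate \eqref{e:FV} backward and combine with the coercivity \eqref{c:F}, \eqref{initialW} and the bootstrap bound on $|a|$ for $\|W\|$, use the integrating factor $e^{-\lambda_{n,k}t/\gamma_n}$ together with $a^-(S_0)=0$ for $|a^-|$, and then invoke continuity and the maximality of $S_\star$ to conclude that only the $|a^+|$ bound can saturate. Your accounting of the exponent gaps (all proportional to $p_0-1>0$) is accurate and is precisely the calibration the paper relies on.
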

\begin{proof}
Let $t\in [S_\star,S_0]$.
First,  
$\q F(0) \le C \| W(S_0) \|^2$ so that with \eqref{initialW}, integrating \eqref{e:FV} on $[t,S_0]$ yields
\[ \q F(t) \le \frac{C}{t} e^{-\omega t}. \]
It follows from \eqref{c:F} and the bootstrap assumption \eqref{eq:BS} that
\begin{align*}
\|W(t)\|^2 & \le C \mathcal F(t) + C |a(t)|^2  + C e^{-10 \omega t} \\
& \le \frac C{t} e^{-2 \omega t}+ C e^{- \frac 2 3 (p_0+2) \omega S_0} + C e^{-10 \omega t}  \le  \frac C{t} e^{-2 \omega t}.
\end{align*}
For $T_0 \ge 4C$ large enough, we get $\ds \| W(t) \| \le \frac{1}{2} e^{-\omega t}$.

Then we integrate \eqref{dotb} on $[t,S_0]$, using \eqref{initialW} and our improved bound $\| W(t) \| \le \frac{C}{\sqrt{t}} e^{-\omega t}$, and we find
\[ |b(t)|\le C \left( \frac{1}{\sqrt{t}} e^{-\omega t} + e^{-10\omega S_0} \right)  \le \frac 12 e^{- \omega t}. \]

In view of \eqref{initial} and \eqref{dota}, we have
\begin{align*}
\left| \frac{d}{dt} \left[ e^{-\frac{\lambda_{n,k}}{\gamma_n} t}    a^{-}_{n,k} (t) \right] \right|
\le  C e^{-\frac{\lambda_{n,k}}{\gamma_n} t -  \frac 1 2 (p_0+1) \omega t},\quad
 a^{-}_{n,k} (S_0)=0.
\end{align*}
Thus, by integration on $[t,S_0]$, $|a^{-}_{n,k} (t)| \le C e^{- \frac 1 2 (p_0+1) \omega t}$. Taking the $\ell^2(\m R^{|I|})$ norm, we get
\[ |a^-(t)| \le \frac 12 e^{-\frac{1}{3} (p_0+1)  \omega t}.
\]
From the contradiction assumption \eqref{contra} and a continuity argument, it follows that there must be at least one equality in the bootstrap assumptions \eqref{eq:BS} at time $S_\star$: in view of the above, the only possibility is $|a^+(S_\star)| = e^{- \frac{1}{3} (p_0+2) \omega S_\star}$.
\end{proof}

Now, we are in a position to state the transversality condition on $a^+$ at $S_\star$.
\begin{lem}\label{le:tra}
Let $\mathcal A(\mathfrak a^+,t)=e^{\frac 23 (p_0+2) \omega t}|a^+(t)|^2$.
Then
\begin{equation}
\frac{\partial}{\partial t} \mathcal A(\mathfrak a^+,t) |_{t=S_\star(\mathfrak a^+)} <0.
\end{equation}
\end{lem}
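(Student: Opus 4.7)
The strategy is a direct computation. Applying the product rule to $\mathcal A(\mathfrak a^+,t)=e^{\frac{2}{3}(p_0+2)\omega t}|a^+(t)|^2$, I would write
\[
\partial_t \mathcal A(\mathfrak a^+,t) = \frac{2}{3}(p_0+2)\omega\, \mathcal A(\mathfrak a^+,t) + 2 e^{\frac{2}{3}(p_0+2)\omega t} \sum_{(n,k)\in I} a^+_{n,k}(t)\, \dot a^+_{n,k}(t),
\]
and specialize to $t=S_\star$. The saturation relation \eqref{saturation} gives $\mathcal A(\mathfrak a^+,S_\star)=1$, so the first term contributes exactly $\frac{2}{3}(p_0+2)\omega$, which can be made arbitrarily small by shrinking $\omega$. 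Everything therefore hinges on producing a strictly negative main contribution in the second term.

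For the second term, I would feed in the ODE from Lemma~\ref{le:dota}, namely $\dot a^+_{n,k} = -\tfrac{\lambda_{n,k}}{\gamma_n} a^+_{n,k} + O\bigl(e^{-\frac12(p_0+1)\omega t}\bigr)$, to obtain
\[
\sum_{(n,k)\in I} a^+_{n,k} \dot a^+_{n,k} \le -\lambda_*\, |a^+|^2 + C |a^+|\, e^{-\frac12(p_0+1)\omega t}, \qquad \lambda_* := \min_{(n,k)\in I} \frac{\lambda_{n,k}}{\gamma_n} > 0.
\]
Using once more $|a^+(S_\star)| = e^{-\frac{1}{3}(p_0+2)\omega S_\star}$, the dominant term yields $-2\lambda_*$ after multiplication by $2e^{\frac{2}{3}(p_0+2)\omega S_\star}$, while the cross term is bounded by $2C\, e^{\left[\frac{1}{3}(p_0+2)-\frac{1}{2}(p_0+1)\right]\omega S_\star} = 2C\, e^{\frac{1-p_0}{6}\omega S_\star}$.

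The key observation---and the step I expect to be the main point to check carefully---is that because $p_0>1$, the exponent $\tfrac{1-p_0}{6}\omega$ is strictly negative; hence the cross term is $o(1)$ as $S_\star \ge T_0 \to \infty$. Putting the estimates together,
\[
\partial_t \mathcal A(\mathfrak a^+,t)\big|_{t=S_\star} \le \frac{2}{3}(p_0+2)\omega - 2\lambda_* + o(1),
\]
and choosing $\omega$ small enough compared to the spectral gap $\lambda_*$ (and $T_0$ large enough), the right-hand side is strictly negative. This yields the claimed transversality. No further bootstrap estimates on $W$, $b$, or $a^-$ are required, since the argument only uses Lemma~\ref{le:dota} and the saturation identity at $S_\star$; this is precisely the simplification permitting the treatment of excited states without detailed spectral information on $\opL$.
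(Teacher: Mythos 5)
Your proof is correct and follows the same route as the paper's: product rule, substitution of the ODE from Lemma~\ref{le:dota}, exploitation of the saturation $|a^+(S_\star)|=e^{-\frac13(p_0+2)\omega S_\star}$, and the observation that $p_0>1$ makes the cross term decay; the smallness condition on $\omega$ ($\tfrac23(p_0+2)\omega\le\tfrac83\omega<\min_{(n,k)}\lambda_{n,k}/\gamma_n$, using $p_0\le2$) matches. One small caveat on your closing remark: Lemma~\ref{le:dota} is itself established under the bootstrap hypotheses \eqref{eq:BS} (which is why the paper cites \eqref{eq:BS} alongside \eqref{dota}), so the bootstrap is implicitly present; and the simplification that allows excited states lies mainly in the coercivity of Proposition~\ref{prop:coer_H} and the handling of a possibly large kernel via the $\Psi^0_{n,\ell}$, rather than in this transversality step, which is standard for Brouwer-type arguments.
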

\begin{proof} 
Let $\ds c_0 =  \min \left( \frac{\lambda_{n,k}}{\gamma_n}, (n,k)\in I \right)$ and consider so small $\omega>0$ that
$\ds \frac 83 \omega <  c_0$.
We compute from \eqref{dota} and \eqref{eq:BS}, for any $t\in [S_\star,S_0]$,
\begin{align*}
\frac d{dt} \mathcal A(\mathfrak a^+,t) 
& =  \frac 23 (p_0+2) \omega  \mathcal A(\mathfrak a^+,t)
+ 2 e^{\frac 23 (p_0+2) \omega t} \sum_{(n,k)\in I} \dot a_{n,k}^+(t) a_{n,k}^+(t)\\
& =  \frac 23 (p_0+2) \omega  \mathcal A(\mathfrak a^+,t)
- 2 e^{\frac 23 (p_0+2) \omega t} \sum_{(n,k)\in I}\frac{\lambda_{n,k}}{\gamma_n} |a^{\pm}_{n,k}(t)|^2
\\ & \quad +O\left(e^{\frac 23 (p_0+2) \omega t}e^{-\frac{1}{2} (p_0+1) \omega t}e^{-\frac 13 (p_0+2) t}\right)\\
& \le -c_0  \mathcal A(\mathfrak a^+,t) +O\left(e^{-\frac 1 6 (p_0-1) \omega t}\right).
\end{align*}
(because for all $(n,k) \in I$, $\ds 2\frac{\lambda_{n,k}}{\gamma_n}  \ge 2 c_0$ and $\ds \frac{2}{3}(p_0+2) \omega \le \frac{8}{3} \omega \le c_0$).
Now, at time $t=S_\star$, by \eqref{saturation}, we have $\mathcal A(\mathfrak a^+,S_\star)=1$ and so
\begin{align*}
\frac d{dt} \mathcal A(\mathfrak a^+,t)|_{t=S_\star(\mathfrak a)}
& \le -c_0    +O\left(e^{-\frac 1 6 (p_0-1) \omega t}\right) \le -\frac {c_0}2 <0,
\end{align*}
for $T_0$ large enough.
\end{proof}

\subsection{Conclusion: topological argument}

To conclude the proof of Proposition~\ref{pr:3}, we argue by contradiction, and assume that for any $\mathfrak a^+ \in \m B$,  \eqref{contra} holds, that is $S_\star(\mathfrak a^+) > T_0$. 

Then Lemma \ref{le:tra} applies to any $\mathfrak a^+ \in \m B$: as a standard consequence of this transversality result, the following map
\[
\mathcal M : \m B \to \m S, \quad \mathfrak a^+ \mapsto \mathcal M(\mathfrak a^+) := 
e^{\frac 13 (p_0+2) \omega S_\star} a^+(S_\star)
\]
is well defined on the unit ball $\m B$ with values in the unit sphere $\m S$ of $\R^{|I|}$, continuous, and its restriction to $\m S$ is the identity. A contradiction is reached from Brouwer's theorem. Hence there exists at least one $\mathfrak a^+ \in \m B$ such that $S_\star(\mathfrak a^+)= T_0$, and it provides the sought for solution $U$ of \eqref{NLKGs}.

We refer to \cite{CMM} and \cite{CoMu} for more details.

\bigskip
\bigskip
\bigskip

\begin{center}
{\scshape Rapha\"el C\^{o}te}
\smallskip\\
{\footnotesize
Université de Strasbourg\\
CNRS, IRMA UMR 7501\\
F-67000 Strasbourg, France\\
\email{cote@math.unistra.fr}
}
\end{center}

\bigskip

\begin{center}
{\scshape Yvan Martel}
\smallskip\\
{\footnotesize
CMLS, \'Ecole polytechnique, CNRS\\
91128 Palaiseau Cedex, France\\
\email{yvan.martel@polytechnique.edu}}
\end{center}

\end{document}